%
%
%
%
\documentclass{amsart}
\usepackage{color}

\newtheorem{theorem}{Theorem}[section]
\newtheorem{lemma}[theorem]{Lemma}

\theoremstyle{definition}

\newtheorem{proposition}[theorem]{Proposition}
\theoremstyle{remark}
\newtheorem{remark}[theorem]{Remark}

\numberwithin{equation}{section}



\begin{document}
 
\title{Infinitely many segregated vector solutions of Schrodinger system}

\author{Ohsang Kwon}
\address{Department of Mathematics, Chungbuk National University, Cheongju, South Korea}
\email{ohsangkwon@chungbuk.ac.kr}
\thanks{O. Kwon is supported by }

\author{Min-Gi Lee}
\address{Department of Mathematics, Kyungpook National University, Daegu, South Korea}
\email{leem@knu.ac.kr}
\thanks{M.-G Lee is supported by }

\author{Youngae Lee}
\address{Department of Mathematical Sciences, College of Natural Sciences,  Ulsan National Institute of Science and Technology (UNIST), South Korea}
\email{youngaelee@unist.ac.kr}
\thanks{ Y. Lee is supported by the National Research Foundation of Korea(NRF) grant funded by the Korea government(MSIT) (No. NRF-2018R1C1B6003403). }

\subjclass[2020]{Primary 35J50, 35Q55, 35B40 ; Secondary  35B45, 35J40}

\date{\today}


\keywords{Coupled Schrodinger system, segregation, vector solution, distribution of bump, energy expansion}

\begin{abstract}
We consider the following system of Schr\"odinger equations
\begin{equation*}\left.\begin{cases}
 -\Delta U + \lambda U = \alpha_0 U^3+ \beta UV^2\\
 -\Delta V + \mu(y) V = \alpha_1 V^3+\beta U^2V 
\end{cases}\right. \text{in} \quad \mathbb{R}^N, \ N=2,  3,\end{equation*}
where  $\lambda$, $\alpha_0$, $\alpha_1>0$ are positive constants,    $\beta \in \mathbb{R}$ is the  coupling constant, and $\mu: \mathbb{R}^N \rightarrow \mathbb{R}$ is a potential function. Continuing the work of Lin and Peng  \cite{lin_peng_2014}, we present a solution of the type where one species has a peak at the origin and the other species has many peaks over a circle, but as seen in the above, coupling terms are nonlinear. 
\end{abstract}

\maketitle



\section{Introduction} \label{sec:intro}

We consider the following system of Schr\"odinger equations
\begin{equation}\left.\begin{cases}\label{system}
 -\Delta U + \lambda U = \alpha_0 U^3+ \beta UV^2\\
 -\Delta V + \mu(y) V = \alpha_1 V^3+\beta U^2V 
\end{cases}\right. \text{in} \quad \mathbb{R}^N, \ N=2,  3,\end{equation}
where  $\lambda$, $\alpha_0$, $\alpha_1>0$ are positive constants,    $\beta \in \mathbb{R}$ is the  coupling constant, and $\mu: \mathbb{R}^N \rightarrow \mathbb{R}$ is a potential function.  Our objective is to construct infinitely many solutions of \eqref{system} where the radial symmetry is broken to exhibit many peaks over a circle and at the origin.

This study answers a question among many problems from exploring the radial symmetry, or breaking it, of the solutions of nonlinear Schr\"odinger equations. By the work of  Gidas et. al. \cite{GNN} via the moving plane method, the following radial symmetry result is well-known, i.e., for the scalar Schr\"odinger equation
\begin{equation} \label{single}
 -\Delta U+ \mu(|y|) U = |U|^{p-1}U \quad \text{in $ \mathbb{R}^N$} 
\end{equation}
if the potential function $\mu$ is radial and non-decreasing in the radial variable, then the solution of \eqref{single} must be radial. Thus, the solutions with interesting patterns breaking the radial symmetry, such as solutions with many peaks, can only exist after violating the assumptions of this problem. Further, it is an interesting question if the cross interactions between species, where one considers the system of coupled Schr\"odinger equations, provide different circumstances for the existence of non-radial solutions.

For the scalar Schr\"odinger equation, by omitting the non-decreasing condition, i.e., $\mu$ is merely radial, Wei and Yan \cite{wei_yan_2014} presented such an example. They considered a $\mu$ that possesses the prescribed asymptotic behavior
\begin{align} \label{asymp0}
 \mu(|y|) = \mu_0 + \frac{a}{|y|^m} + {O}\left(\frac{1}{|y|^{m+\theta}}\right) \quad \text{for some $\mu_0,a,\theta>0$ and $m>1$ as $|y| \rightarrow \infty$}.
\end{align}
The constructed solution $u$ has segregated $k$ peaks over a circle. More specifically, $u$ peaks about $k$ points $x_i=(z_i,\mathbf{0})$, $i=1,\cdots,k$ where
$$z_i= \left( R \cos\left(\frac{2(i-1)\pi}{k}\right), ~R \sin\left(\frac{2(i-1)\pi}{k}\right)\right)\in \mathbb{R}^2 \quad \text{for $i=1,\cdots,k$}$$
and the radius $R$ is sufficiently large, which dependes on the number of peaks $k$. They showed that there are infinitely many such solutions for every $k$ larger than a certain $k_0$. Cerami, et al. \cite{cerami_passaseo_solimini_2015} showed that such a result holds when the radial symmetry of $\mu$ is also broken. 

Also, it has been actively studied if radial symmetry result of the scalar case continues for the problems of coupled Schr\"odinger systems. If $n$-species interact, we employ $n$ potential functions. We adopt a naive classification by behaviors of potentials. We will speak of a potential that is a constant function, an important case among non-decreasing potentials, or a potential that is an increasing function, and so on. In regard with this, for the prescribed asymptotic behaviors similar to \eqref{asymp0},  asymptotically, $a\le 0$ and  $a>0$ correspond to the non-decreasing and decreasing potentials, respectively.

Having those said, we remark on a few relevant works. Peng and Wang \cite{peng_wang_2013} considered two species Schr\"odinger system 
\begin{equation*}\begin{cases}
 -\Delta U+ P(|x|)U = \alpha_0 U^3 + \beta V^2 U,\\
 -\Delta V+ Q(|x|)V = \beta V^3 + \alpha_0 U^2 V,
\end{cases}\end{equation*}
and constructed various interesting solutions. They generated solutions of patterns where two species peak in a synchronized manner at shared sites or in a segregated manner at respective sites over a circle. The assumption on the potential function were such that one of $P(\cdot)$ and $Q(\cdot)$ violates the non-decreasing condition, or some other sophisticated conditions were supposed. See Theorem 1.1 and 1.2 in \cite{peng_wang_2013}. Wang et al. \cite{wang_zhao_2017} has improved the results of \cite{peng_wang_2013}, allowing larger class of potentials. The second leading order in \eqref{asymp0} with $m>0$ has been included. Long, et. al. \cite{long_tang_yang_2020} considered a problem where two potentials are  exponentially decreasing and obtained the result. The problems with severer violations, further omitting the radial symmetry of potentials, have been investigated by Ao and Wei \cite{ao_wei_2014} and Ao, et. al. \cite{ao_wang_yao_2016}. The case with nonlinearity other than $p=3$ have been studied by Zheng \cite{zheng_2017}. Recently, Peng, et. al. \cite{peng_wang_wang_2019} constructed non-radial solutions in the three species problem, where the potentials are all positive constant; thus non-decreasing. Their results clearly show that it is possible to have non-radial patterns in the system case even with radial and non-increasing potentials. 
The results can be compared to those of Wei and Yan \cite{wei_yan_2014}. Not infinitely many solutions are constructed and results follow under a certain condition on the constants.

Lin and Peng \cite{lin_peng_2014} advanced a question in another direction. They sought a different pattern. In their solution, while one species peaks at many points over a circle as before, the other species peaks at the origin. They constructed solutions of these patterns in a model where the coupling terms are linear.

Our work in this paper can be summarized as follows. Continuing the work of Lin and Peng  \cite{lin_peng_2014}, we seek a solution of the type where one species has a peak at the origin but we work with the two species model \eqref{system} where the coupling terms are nonlinear. We also consider one potential to be a constant function, while the other potential $\mu(|y|)$ violates the non-decreasing condition. We though mention that we have made a small improvement from the work of Wei and Yan \cite{wei_yan_2014} and others that the asymptotic behavior of $\mu$ is assumed to satisfy the following with $m> \frac{1}{2}$:
\begin{equation}\tag{$A$}\label{A}
 \begin{aligned}
  &1. \quad \mu(y) = \mu_0 + \frac{a}{|y|^m} + O\left(\frac{1}{|y|^{m+\theta}}\right) \quad \text{for some $\mu_0,a,\theta>0$ and $m> \frac{1}{2}$ as $|y| \rightarrow \infty$.}\\
  &2. \quad \text{$\mu$ is bounded and $\mu(y)\ge \tilde{\mu}_0 > 0$ for all $y$.}
 \end{aligned}
\end{equation}
Now we state our main theorem. $U_0$, $(U_i)_{i=1}^k$, the exponent $p$, and the fraction number $f_0$ in the statement will be fixed in Section \ref{notions} and \ref{sec:result}.
\begin{theorem} \label{mainthm} Suppose \eqref{A} is satisfied with $m > \frac{1}{2}$ and $\left|\frac{\beta}{\mu_0}\right| < f_0$. Then $\exists k_0$ such that for $k\ge k_0$ there exists a solution of the form $\Big(U_0 +u, \displaystyle \sum_{i=1}^k V_i + v\Big)$ of \eqref{system} where for some constant $C>0$ and $p> 0$ $ \|u\|_{H^1},  \|v\|_{H^1} \le Ck^{-p}$.
\end{theorem}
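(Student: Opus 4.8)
The plan is to use the Lyapunov–Schmidt reduction. We work in the subspace of $H^1(\mathbb{R}^N)\times H^1(\mathbb{R}^N)$ consisting of functions with the dihedral symmetry forced by the $k$-point configuration (invariance under rotations by $2\pi/k$ in the first two coordinates and under the reflections fixing the circle), so that the approximate solution $\big(U_0, \sum_{i=1}^k V_i\big)$ lies in it; here $V_i$ is a translate of the single-equation ground state $U_i$ centered at $z_i$ on the circle of radius $R$, with $R=R(k)$ to be chosen, and $U_0$ is the ground state of the first equation with constant potential $\lambda$ localized at the origin. First I would set up the ansatz $\big(U_0+u,\ \sum_{i=1}^k V_i+v\big)$, plug it into \eqref{system}, and extract the error term $\mathcal{E}$: this is the sum of $-\Delta U_0+\lambda U_0-\alpha_0 U_0^3$ (which vanishes), the cross-coupling $\beta U_0 (\sum V_i)^2$, the analogous term $\beta (\sum V_i)^2$-type contributions in the second equation, the potential defect $(\mu(y)-\mu_0)\sum V_i$, and the nonlinear interaction errors from $\big(\sum V_i\big)^3$ versus $\sum V_i^3$. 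Using \eqref{A}, the exponential decay of ground states, the separation $|z_i-z_j|\sim R/k$ and $|z_i|=R$, one estimates $\|\mathcal{E}\|$ in the appropriate dual norm and shows it is $O(k^{-p_0})$ for a suitable $p_0>0$ once $R$ is chosen as a power of $k$ balancing the potential term $a R^{-m}$ against the interaction terms $e^{-cR/k}$ and $e^{-cR}$ — this choice is where the hypothesis $m>\tfrac12$ enters.

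Next I would establish the invertibility, uniformly in $k$, of the linearized operator $\mathcal{L}$ at the approximate solution, restricted to the symmetric space and to the orthogonal complement of the (approximate) kernel generated by the translation modes $\partial_R V_i$ of the bumps on the circle — note the origin bump $U_0$ is nondegenerate in the radial class so it contributes no kernel. The key input is the nondegeneracy of the single Schrödinger ground state modulo translations (the operator $-\Delta+\lambda-3\alpha_0 U^2$ has kernel spanned by $\partial_{x_j}U$), together with the smallness of the coupling constant $|\beta/\mu_0|<f_0$, which guarantees that the off-diagonal coupling in the linearization cannot destroy the spectral gap; one proves this by a contradiction/concentration-compactness argument showing any would-be near-kernel element must, after localization around each bump, converge to an element of the kernel of the decoupled scalar operator, forcing it into the span we quotiented out. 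With $\mathcal{L}$ uniformly invertible and $\|\mathcal{E}\|$ small, the contraction mapping theorem applied to the rescaled nonlinear problem yields, for each admissible $R$ in a window, a unique small $(u,v)=(u_R,v_R)$ solving the projected equation, with $\|u_R\|_{H^1}+\|v_R\|_{H^1}\le Ck^{-p}$.

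Finally I would carry out the finite-dimensional reduction: solving the original system is equivalent to solving the one-parameter "reduced equation" obtained by pairing the remaining equation against the kernel modes, which — by the standard Lyapunov–Schmidt identity — amounts to finding a critical point of the reduced energy $\mathcal{I}_k(R):=E\big(U_0+u_R,\ \sum V_i+v_R\big)$ as a function of $R$. The expansion of $\mathcal{I}_k(R)$ has the form (constant independent of $R$) $+\,k\big[c_1 a R^{-m} - c_2 e^{-|z_1-z_2|}(1+o(1))\big]+(\text{higher order})$, with $c_1,c_2>0$; since $-\infty$ as $R\to$ (lower end) is beaten by a strict interior maximum in $R$ over the admissible window, $\mathcal{I}_k$ attains a maximum at some interior $R_k$, which is the desired critical point, producing a genuine solution of \eqref{system} of the claimed form for all $k\ge k_0$.

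I expect the main obstacle to be the uniform-in-$k$ invertibility of the linearized operator in the presence of the \emph{nonlinear} coupling $\beta UV^2$: unlike the linear-coupling model of \cite{lin_peng_2014}, the cross terms $2\beta U_0 V_i$ and $2\beta V_i U_0$ appear in the off-diagonal of $\mathcal{L}$ and couple the origin bump to every circle bump, so one must show the quadratic form stays coercive on the constrained space — this is exactly what the quantitative smallness condition $|\beta/\mu_0|<f_0$ is designed to buy, and making the threshold $f_0$ explicit and $k$-independent is the delicate point; the secondary difficulty is tracking the interaction energy expansion precisely enough, with the small parameter $m>\tfrac12$ (rather than $m>1$ as in earlier works), to ensure the reduced functional still has a nondegenerate max in $R$.
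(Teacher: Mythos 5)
Your proposal follows the same Lyapunov--Schmidt/localized-energy strategy as the paper: same symmetric ansatz $(U_0+u,\sum_i V_i+v)$, the same orthogonality constraint spanned by the $R$-translation mode of the circle bumps, the contraction/fixed-point step producing $(u_R,v_R)$ with $\|u_R\|,\|v_R\|\le Ck^{-p}$, and the final reduced-energy expansion in $R$ with an interior maximum over an $R$-window of size $\sim k\log k$.

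The one place where you and the paper genuinely diverge is in handling the linearized operator. You propose to absorb the off-diagonal cross terms $\beta U_0 V_i$ into a \emph{coupled} linearization $\mathcal{L}$ and prove its uniform invertibility by a contradiction/concentration-compactness argument, with $|\beta/\mu_0|<f_0$ preserving the spectral gap. The paper avoids this entirely: it only inverts the two \emph{decoupled} scalar linearized operators $L_0$ (around $U_0$ in $H_s$) and $L_1$ (around $\sum_i V_i$ in $\hat H_s$), whose uniform invertibility is already known from the scalar theory of Kwong and Wei--Yan, and leaves every $\beta$-coupling term on the nonlinear right-hand side in $g_0,g_1$. The smallness $|\beta|\gamma_0<1$ is then used not as a coercivity statement but as the contraction constant in the fixed-point map (see the bound $|I_1|\le(2|\beta|\gamma_0+Ce^{-\gamma R})\|(u,v)\|$ and the Step~2 Lipschitz estimate). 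Mathematically these are two packagings of the same perturbation fact, but the paper's route is more economical: since $\beta$ is small, a direct Neumann-series-type perturbation off the decoupled operators suffices, and the concentration-compactness/contradiction machinery you sketch (which would be needed if $\beta$ were not small) can be bypassed. Your version would produce a new invertibility lemma for the full $\mathcal{L}$ and would need care to make the threshold $f_0$ explicit and $k$-independent; the paper's version inherits the threshold directly from the $L^\infty$ bound $\gamma_0$ on $U_0^2$ and $(\sum_i V_i)^2$. A minor secondary remark: the paper's orthogonality constraint is against $V_i^2\partial_R V_i$ rather than $\partial_R V_i$, but this is a standard cosmetic choice and your statement about the approximate kernel is otherwise in line with what is actually used in the Lagrange-multiplier step.
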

\begin{remark}
  Using dilation arguments, it is sufficient to give a proof of Theorem \ref{mainthm} under the assumption $\mu_0=1$ in \eqref{A} for the  case $|\beta| \le f_0$. For the rest of the paper, $\mu_0=1$.
\end{remark}
The remainder of the paper is organized as follows. In Section \ref{notions}, we introduce some notions. We prove the main theorem in the Section \ref{sec:result}.

\section{Notations} \label{notions}

First, we let $U_{0}$ and $V_{0}$ be respectively the unique  {positive} radial solutions (see Kwong \cite{Kwong}) of
\begin{equation}\label{u0v0}
 -\Delta U_{0} + \lambda U_{0} - \alpha_0 U_{0}^3 = 0, \quad \text{and} \quad -\Delta V_{0} + V_{0} - \alpha_1 V_{0}^3 = 0,
\end{equation}
Throughout this paper, $M$ will denote the constant so that
\begin{equation}\label{exp}U_{0}(y) \le Me^{-\sqrt{\lambda}|y|}~{\rm min}\left\{1, |y|^{-\left(\frac{N-1}{2}\right)}\right\}, \quad \text{and} \quad V_{0}(y) \le Me^{-|y|}~{\rm min}\left\{1, |y|^{-\left(\frac{N-1}{2}\right)}\right\}.\end{equation}

We fix an integer $k>1$, which will be the number of bumps away from the origin for the second component in \ref{system}, and let \[V_{i}(y):=V_{0}(y-x_i)\quad\mbox{for}\  i=1,\cdots,k.\] For the points
$$z_i= \left( R \cos\left(\frac{2(i-1)\pi}{k}\right), ~R \sin\left(\frac{2(i-1)\pi}{k}\right)\right) \quad \text{for $i=1,\cdots,k$,}$$
$x_i = z_i$ for $N=2$, and $x_i = (z_i,0)$ for $N=3$.  Here, $R>0$ will be chosen sufficiently large later, which depends on $k$. It is also convenient to subdivide the plane $\mathbb{R}^2$ into $k$ sectors
\begin{equation}\label{sectors}
\Omega^\prime_i := \left\{z \in \mathbb{R}^2~\Big|~ z_i\cdot z \ge  {R}|z|\cos\left(\frac{\pi}{k}\right)\right\} \quad i=1,\cdots,k 
\end{equation}
and we set $\Omega_i = \Omega^\prime_i$ if $N=2$ and $\Omega_i = \Omega^\prime_i \times \mathbb{R}$ if $N=3$.

On $H^1( \mathbb{R}^N)$, we introduce two inner products
$$ \left<u,v\right>_0:= \int_{ \mathbb{R}^N} \nabla u\cdot\nabla v + \lambda uv \quad \text{and} \quad \left<u,v\right>_1:= \int_{ \mathbb{R}^N} \nabla u\cdot\nabla v + \mu(y) uv.$$ and two norms $\|\cdot\|_0:=\sqrt{\left<\cdot,\cdot\right>_0}$, and $\|\cdot\|_1:=\sqrt{\left<\cdot,\cdot\right>_1}$. We fix a closed subspace $H_s$ of $H^1( \mathbb{R}^N)$ possessing the following symmetry. Let $Q$ be a couter-clockwise rotating matrix of an angle $\frac{2\pi}{k}$ for the first two coordinates. We define 
\begin{equation*}
 H_s:= \left\{ u \in H^1 (\mathbb{R}^N) ~\Big|~ \text{$u(Q^iy) = u(y)$ for $i=1,\cdots,k$ and $u$ is even in $y_n$, $n=2,\cdots,N$}\right\}.
\end{equation*}
and the closed subspace $\hat{H}_s$
\begin{equation}
 \begin{aligned} 
    \hat{H}_s:= \left\{ v \in H_s ~\Big|~  \sum_{i=1}^k\int_{\mathbb{R}^N}  V_{i}^2 \frac{\partial V_{i}}{\partial R} \,v~dy = 0 \right\}.
 \end{aligned}
\end{equation}
We define $E:= H_s \times \hat{H}_s$ and we equip $E$ with the norm \[\|(u,v)\|:= \mbox{max} \left\{ \| u\|_0, \| v\|_1\right\}.\] 

Nextly, since $m> \frac{1}{2}$ there exists $\tau_0$ such that 
$$\tau_0 m - \frac{1}{2} > 0 \quad \text{and} \quad \frac{1}{2} < \tau_0 < 1.$$ We set 
\begin{equation} \label{delta0}
4\delta_0:= {\rm min} \left\{ \tau_0m - \frac{1}{2}, \tau_0 - \frac{1}{2}, \theta \right\}>0, \quad  p:= \tau_0m - \frac{1}{2} - \delta_0 > 0
\end{equation} 
and define the inverval for each $k$
\begin{equation}\label{sk}{S_k:=\left[\left(\frac{m - \delta_0}{2\pi} \right)k\log k,\left(\frac{m+\delta_0}{2\pi}\right)k\log k\right]}.
\end{equation}
We will only consider $R$ that is in the inverval $S_k$ for each $k$. This enables us to take $k$ as our main parameter, and the notations ${O(\cdot)}$ and $o(\cdot)$ are always as $k \rightarrow \infty$.

\section{Results} \label{sec:result}
Our solution scheme is based on the following formal observations. Suppose $\Big(U_0 +u, \displaystyle \sum_{i=1}^k V_i + v\Big)$ is a solution of \eqref{system}, or $(u,v)$ formally solves
\begin{equation*}
 \left\{
 \begin{aligned}
 &-\Delta u + \lambda u - 3\alpha_0U_0^2 u =g_0(u,v),  \\
 &-\Delta v + \mu(y) v - 3\alpha_1\left(\sum_{i=1}^k V_{i}\right)^2 v =g_1(u,v),  
 \end{aligned}
 \right.
\end{equation*}
where
\begin{equation*}
 \left\{
 \begin{aligned}
 & g_0(u,v):=3\alpha_0U_{ {0}} u^2 + \alpha_0u^3 + \beta(U_0+u)\left(\sum_{i=1}^k V_{i} +v\right)^2,\\
 & g_1(u,v):= 3\alpha_1\left(\sum_{i=1}^k V_{i}\right)v^2 + \alpha_1v^3 + \beta\left(U_0 +u\right)^2\left(\sum_{i=1}^k V_{i} +v\right)  \\
 & \quad \quad \quad \quad \quad \quad \quad - (\mu-1)\left(\sum_{i=1}^k V_{i}\right)+ \alpha_1\left\{ \left(\sum_{i=1}^k V_{i}\right)^3 - \sum_{i=1}^k V_{i}^3 \right\}.
 \end{aligned}
 \right.
\end{equation*}
Based on these observations, for a fixed $u \in H_s$, we consider the following linear functional $\ell_u$ on $H_s$:
$$\ell_u[\varphi] := \int_{ \mathbb{R}^N}\left( \nabla u\cdot\nabla\varphi + \lambda u \varphi - 3\alpha_0 U_0^2 u\varphi\right)dy,$$
which is obviously bounded. This in turn, via Riesz representation theorem, defines the linear operator $L_0: H_s\rightarrow H_s$ by the defining relation
\begin{equation}\label{l0}\left<L_0(u),\varphi\right>_0 = \ell_u[\varphi].\end{equation}
For a fixed $v \in \hat{H}_s$,
$$\tilde\ell_v[\phi]:=\int_{ \mathbb{R}^N}\left( \nabla v\cdot\nabla\phi + \mu(y) v \phi - 3\alpha_1 \left(\sum_{i=1}^k V_{i}\right)^2 v\phi\right)dy,$$
and the linear operator $L_1: \hat{H}_s \rightarrow \hat{H}_s$ is defined by the relation
\begin{equation}\label{l1}\left<L_1(v),\phi\right>_1 = \tilde\ell_v[\phi].\end{equation}

On the other hand,  $g_0(u,v)$ and $g_1(u,v)$ {defines the linear functionals {$G_0$ and $G_1$,} respectively on $H_s$ and $\hat{H}_s$} 
\begin{align*}
 G_0(u,v)[\varphi]:=\int_{ \mathbb{R}^N} g_0(u,v)\varphi \quad \text{for $\varphi \in H_s$}, \quad
 G_1(u,v)[\phi]:=\int_{ \mathbb{R}^N} g_1(u,v)\phi \quad \text{for $\phi \in \hat{H}_s$}.
\end{align*}
Bounded of these functionals are justified by the Sovolev embeddings for $N=2, 3$ and  calculations later in the proof of Proposition \ref{Fixed}.
Applying Riesz representation theorem, there exist $\Gamma_0(u,v) \in H_s$, $\Gamma_1(u,v) \in \hat{H}_s$ with
$$\left<\Gamma_0(u,v),\varphi\right>_0 = G_0(u,v)[\varphi] \quad \text{for $\varphi \in H_s$}, \quad \left<\Gamma_1(u,v),\phi\right>_1 = G_1(u,v)[\phi] \quad \text{for $\phi \in \hat{H}_s$}.$$
Combined with the inverses $L_0^{-1}$ and $L_1^{-1}$ on $H_s$ and $\hat{H}_s$ respectively (See Lemma \ref{inv0}), the map
$$(u,v) \mapsto (\bar{u},\bar{v})=\left(L_0^{-1}\Big(\Gamma_0(u,v)\Big), ~L_1^{-1}\Big(\Gamma_1(u,v)\Big)\right)$$
defines an operator $F: E \rightarrow E$. If $(u,v)$ is any fixed point of $F$ then it holds that
\begin{align*}
 &\left(L_0(u),L_1(v)\right) = \left(\Gamma_0(u,v),\Gamma_1(u,v)\right)\end{align*}
 if and only if 
 \begin{align*}
 \left\{ 
 \begin{aligned}
  &\int_{\mathbb{R}^N} \nabla u \cdot \nabla \varphi + \lambda u\varphi - 3\alpha_0U_0^2 u\varphi = \int_{\mathbb{R}^N} g_0(u,v)\varphi \quad\mbox{for all}\  \varphi\in H_s,\\
  &\int_{\mathbb{R}^N} \nabla v \cdot \nabla \phi + \mu(y) v\phi - 3\alpha_1\left(\sum_{i=1}^k V_{i}\right)^2v\phi = \int_{\mathbb{R}^N} g_1(u,v)\phi \quad \mbox{for all}\ \phi \in \hat{H}_s.
 \end{aligned}
 \right.
\end{align*}

The following results of the invertibilities of $L_0$ and $L_1$  are known.
\begin{lemma} \cite{Kwong},  \cite[Lemma 2.1]{wei_yan_2014}  \label{inv0} There are constants $\rho_0, \rho_1>0$ satisfying
\begin{align*}
\rho_0 \|\varphi\|_0 &\le  \|L_0 \varphi\|_0  \le (\rho_0)^{-1}\|\varphi\|_0  \quad \text{for all $\varphi \in H_s$},\ \mbox{and} \\
\rho_1 \|\phi\|_1 &\le  \|L_1 \phi\|_1 \le (\rho_1)^{-1}\|\phi\|_1 \quad \text{for all $\phi \in \hat{H}_s$}
\end{align*}
\end{lemma}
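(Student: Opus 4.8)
The statement to prove is Lemma \ref{inv0}: the operators $L_0$ on $H_s$ and $L_1$ on $\hat H_s$ are invertible with uniform bounds. Here is how I would approach it.

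\medskip

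\textbf{Plan of proof.} The operators $L_0$ and $L_1$ are self-adjoint bounded linear operators (with respect to $\langle\cdot,\cdot\rangle_0$ and $\langle\cdot,\cdot\rangle_1$ respectively), since they are defined via symmetric bilinear forms; moreover each has the form $\mathrm{Id} - K$ where $K$ is compact. Indeed, writing $\langle L_0 u,\varphi\rangle_0 = \langle u,\varphi\rangle_0 - 3\alpha_0\int_{\mathbb R^N} U_0^2 u\varphi$, the perturbation $u\mapsto$ (Riesz representative of $\varphi\mapsto 3\alpha_0\int U_0^2 u\varphi$) is compact because $U_0$ decays exponentially (by \eqref{exp}), so multiplication by $U_0^2$ followed by the compact Sobolev embedding $H^1\hookrightarrow L^2_{\mathrm{loc}}$ (plus tail smallness from the decay) yields a compact map. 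The same holds for $L_1$ with $U_0^2$ replaced by $\big(\sum_{i=1}^k V_i\big)^2$, using the exponential decay of each $V_i$. By Fredholm theory, $L_0$ and $L_1$ are invertible iff they are injective, and once injective the open mapping theorem gives a bounded inverse; the content of the lemma is that the bound is \emph{uniform in $k$} (for $L_1$), and that injectivity indeed holds on the restricted symmetric spaces.

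\medskip

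\textbf{Key steps.} First, for $L_0$: injectivity on $H_s$ follows from the nondegeneracy of the ground state $U_0$ (Kwong \cite{Kwong}): the kernel of $-\Delta + \lambda - 3\alpha_0 U_0^2$ on $H^1(\mathbb R^N)$ is spanned by $\partial_{y_1}U_0,\dots,\partial_{y_N}U_0$, none of which lie in $H_s$ (they are odd in some coordinate, or not invariant under the rotation $Q$ when $k\ge 2$ — here one must be slightly careful in dimension $N=3$, but the even-in-$y_2,\dots,y_N$ condition kills $\partial_{y_2}U_0,\partial_{y_3}U_0$ and the $Q$-invariance kills $\partial_{y_1}U_0$). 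Hence $L_0$ is injective, so invertible, and the bound $\rho_0$ is a fixed constant independent of $k$. Second, for $L_1$: this is the substantive part, and here I would cite/adapt \cite[Lemma 2.1]{wei_yan_2014}. The point is that the quadratic form $\langle L_1\phi,\phi\rangle_1 = \|\phi\|_1^2 - 3\alpha_1\int\big(\sum V_i\big)^2\phi^2$ must be shown to be uniformly bounded below in absolute value on the orthogonal complement of its would-be kernel. The near-kernel of $L_1$ consists of functions concentrated near the $x_i$ looking like translates of $\partial_{y_1}V_0$-type directions and the radial dilation direction $\sum_i V_i^2\,\partial V_i/\partial R$; the symmetry constraints in $H_s$ remove the translation/rotation directions, and the defining constraint of $\hat H_s$ removes exactly the dilation direction. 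One then shows, by a contradiction/blow-up argument localizing near a single bump (the bumps are separated by distance $\sim R/k \to \infty$, so interactions are exponentially small), that on $\hat H_s$ the form is bounded below uniformly, giving a uniform $\rho_1$.

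\medskip

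\textbf{Main obstacle.} The routine part (self-adjointness, compactness, Fredholm alternative, upper bounds $\|L_i\varphi\|\le\rho_i^{-1}\|\varphi\|$) is immediate from exponential decay and Sobolev embedding. The real difficulty is the \emph{uniform-in-$k$ lower bound} for $L_1$ on $\hat H_s$: one must rule out that as $k\to\infty$ (with $R\in S_k$ growing like $k\log k$) a sequence $\phi_k$ with $\|\phi_k\|_1=1$ has $\|L_1\phi_k\|_1\to 0$. This requires a careful localization: decompose $\phi_k = \sum_i \chi_i\phi_k$ using cutoffs adapted to the sectors $\Omega_i$, use the symmetry to reduce to one sector, and extract a limit profile solving the linearized equation around $V_0$; the symmetry and the $\hat H_s$-constraint force the limit to be orthogonal to the full kernel $\{\partial_{y_j}V_0\}_j \oplus \langle V_0^2\,\partial_R(\cdot)\rangle$ of the limiting operator, hence zero, contradicting $\|\phi_k\|_1=1$ after accounting for the uniformly small cross-terms between distinct bumps (controlled by \eqref{exp} and $R/k\to\infty$). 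Since this is exactly the mechanism in \cite[Lemma 2.1]{wei_yan_2014}, adapted to the present symmetry class $\hat H_s$, I would present it by reduction to that reference, spelling out only the verification that the symmetry constraints annihilate the correct kernel directions.
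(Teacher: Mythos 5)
The paper does not give a proof of Lemma \ref{inv0}; it simply cites \cite{Kwong} and \cite[Lemma 2.1]{wei_yan_2014}, so there is no in-paper argument to compare against. Your proposal is a correct reconstruction of the standard argument from those references (Fredholm alternative plus nondegeneracy for $L_0$; localization/blow-up with the symmetry constraints and the $\hat H_s$-orthogonality condition for the uniform-in-$k$ bound on $L_1$), and this is exactly the route the citations intend. One small imprecision worth tightening: $V_0^2\,\partial_R(\cdot)$ is not itself a kernel element of the limiting operator $-\Delta + 1 - 3\alpha_1 V_0^2$ (its kernel is just $\mathrm{span}\{\partial_{y_j}V_0\}$); rather, the $\hat H_s$-constraint $\sum_i\int V_i^2\,\partial_R V_i\,v=0$ localizes to the non-degenerate pairing $\int V_0^2\,\partial_{y_1}V_0\cdot v=0$, which suffices to exclude the one remaining symmetric kernel direction $\partial_{y_1}V_0$.
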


Nextly, we  establish the uniform bound of the sum $\displaystyle \sum_{i=1}^k V_{i}$ independent of the number of bumps $k$, provided that $k$ and $\frac{R}{k}$ are both sufficiently large. We provide the proof of a tweaked version (allowing $\eta \in (0,2])$ of Lemma A.1 in \cite{wei_yan_2014}.

\begin{lemma}\label{ksum}  \cite[Lemma A.1]{wei_yan_2014} Fix any $\eta \in (0,2]$. For some $k_0>0$, for any $k\ge k_0$, for any $R \in S_k$, and for $y \in{\Omega}_1$
\begin{equation} \label{ksumest} \displaystyle \sum_{i=2}^k V_{i}(y) \le 6Me^{-\frac{\eta R\pi }{k}} e^{(-1+\eta)|y-x_1|}\quad \text{and} \quad \displaystyle \sum_{i=1}^k V_{i}(y) \le 7Me^{(-1+\eta)|y-x_1|}.
\end{equation}

\end{lemma}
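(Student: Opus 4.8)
The plan is to exploit the exponential decay bound \eqref{exp} for $V_0$ together with the geometric arrangement of the points $x_i$ on the circle of radius $R$, and to compare the sum $\sum_{i=2}^k V_i(y)$ to a geometric series. Fix $y\in\Omega_1$, so $y$ lies in the sector attached to $x_1$; the key geometric fact is that for $i\neq 1$ the distance $|y-x_i|$ grows at least linearly in the ``circular distance'' from $x_1$ to $x_i$. More precisely, writing $d_i:=|x_i-x_1|=2R\sin\big(\tfrac{(i-1)\pi}{k}\big)$ for the chord lengths, one has the triangle-type estimate $|y-x_i|\ge \tfrac{1}{2}d_i + (|y-x_1|-\tfrac{1}{2}d_i)$ refined on the sector $\Omega_1$ to something like $|y-x_i|\ge |y-x_1| + c\,d_i$ for a universal constant $c>0$ when $y\in\Omega_1$ (this is where the defining inequality $z_1\cdot z\ge R|z|\cos(\pi/k)$ of \eqref{sectors} is used: it forces $y$ to stay angularly close to $x_1$, so moving to any other $x_i$ costs at least a fixed fraction of the chord length). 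Granting such an inequality, \eqref{exp} gives
\[
V_i(y)=V_0(y-x_i)\le M e^{-|y-x_i|}\le M e^{-|y-x_1|}\,e^{-c\,d_i},
\]
and since $V_0\le M$ trivially we may instead interpolate: $V_i(y)\le M e^{-(1-\eta)c\,d_i}\,e^{-(1+\eta-1)\cdot(\text{something})}$ — more cleanly, split the exponent as $|y-x_i|\ge (1-\eta)|y-x_i| + \eta|y-x_i| \ge (1-\eta)\big(|y-x_1|+c d_i\big)+\eta c d_i$ is not quite what is wanted; instead I would write $|y-x_i| \ge (1-\eta)|y-x_1| + \eta|y-x_i|$ is false in general, so the correct route is $|y-x_i|=(-1+\eta)|y-x_1| + \big(|y-x_i| + (1-\eta)|y-x_1|\big)$ and bound the bracket below by $\eta' R\pi/k + \text{(chord contribution)}$. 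I will organize the exponent bookkeeping so that the factor $e^{(-1+\eta)|y-x_1|}$ is pulled out exactly and the remainder dominates a convergent geometric sum.

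The second step is to sum over $i=2,\dots,k$. Using $d_i=2R\sin\big(\tfrac{(i-1)\pi}{k}\big)$ and the concavity bound $\sin t\ge \tfrac{2}{\pi}t$ on $[0,\tfrac{\pi}{2}]$, for $2\le i\le \lceil k/2\rceil$ we get $d_i\ge \tfrac{4R(i-1)}{k}$, and for the symmetric indices $i> k/2$ the analogous bound with $k-i+1$ in place of $i-1$. Hence the tail of the exponent is at least $\gamma\cdot\tfrac{R}{k}\cdot(i-1)$ for some fixed $\gamma>0$ (depending on $\eta$), and the closest neighbors $i=2,k$ contribute the dominant term $e^{-\gamma R\pi/k}$ (up to adjusting constants). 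Summing the resulting geometric series $\sum_{j\ge 1} e^{-\gamma (R/k) j}$ and using that $R\in S_k$ forces $R/k\ge \tfrac{m-\delta_0}{2\pi}\log k\to\infty$, the series converges with sum $\le 2e^{-\gamma R/k}$ once $k\ge k_0$; after tracking the explicit constants this yields the bound $6Me^{-\eta R\pi/k}e^{(-1+\eta)|y-x_1|}$ claimed (the numerical constants $6$ and the factor $\pi$ in the exponent come out of the $\sin t\ge \tfrac{2}{\pi}t$ estimate and a harmless absorption of lower-order terms). Adding the $i=1$ term, which is simply bounded by $M\le Me^{(-1+\eta)|y-x_1|}\cdot e^{(1-\eta)|y-x_1|}$ — no: here one uses $V_1(y)=V_0(y-x_1)\le Me^{-|y-x_1|}\le Me^{(-1+\eta)|y-x_1|}$ directly since $\eta\le 2$ makes $-1+\eta\ge -1$ — gives $\sum_{i=1}^k V_i(y)\le (1+6)Me^{(-1+\eta)|y-x_1|}=7Me^{(-1+\eta)|y-x_1|}$.

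The main obstacle is the geometric lemma in the first paragraph: getting a clean lower bound of the form $|y-x_i|\ge |y-x_1| + c\,d_i$ (or whatever exact split of the exponent makes the bookkeeping close with the stated constants $6$, $7$, and $e^{-\eta R\pi/k}$) valid uniformly for $y\in\Omega_1$ and all $i\neq 1$, with a constant $c$ that does not degrade as $k\to\infty$. This requires care near the boundary of the sector $\Omega_1$ and for the two nearest neighbors $i=2$ and $i=k$, where $y$ can be comparatively close to $x_i$; the sector constraint $z_1\cdot z\ge R|z|\cos(\pi/k)$ is exactly what saves the day, and the point is to convert that angular pinch into the desired additive distance gain. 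A secondary, purely technical nuisance is handling the $N=3$ case, where $x_i=(z_i,0)$ and $\Omega_i=\Omega_i'\times\mathbb{R}$: since the extra coordinate $y_N$ enters $|y-x_i|^2$ additively and identically for every $i$, all the estimates reduce verbatim to the planar ones after writing $|y-x_i|^2=|z-z_i|^2+y_N^2$, so this costs nothing. Throughout, the hypothesis $R\in S_k$ is used only at the very end to guarantee $e^{-\gamma R/k}$ is small enough (indeed $\le k^{-\text{const}}$) to close the geometric sum and absorb constants, so the proof in fact goes through for any $R$ with $R/k$ bounded below by a sufficiently large absolute constant depending on $\eta$.
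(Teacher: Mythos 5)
Your proposal hinges on a geometric inequality of the form $|y-x_i|\ge |y-x_1|+c\,d_i$ with a constant $c>0$ uniform in $i$, $k$ and $y\in\Omega_1$, and you correctly flag this as the main obstacle. Unfortunately no such $c>0$ exists. Take $y$ to be the midpoint of the chord joining $x_1$ and $x_2$: by symmetry it lies on the boundary ray of $\Omega_1$ (the direction at angle $\pi/k$ from $x_1$), hence in $\overline{\Omega_1}$, and there $|y-x_1|=|y-x_2|=d_2/2$, forcing $c\le 0$. The sector constraint $z_1\cdot z\ge R|z|\cos(\pi/k)$ only yields the weaker, ``$c=0$'' fact $|y-x_i|\ge|y-x_1|$ for all $i$, and that alone does not produce the decaying chord factor. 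So the ``angular pinch'' you hope to convert into an additive distance gain does not in fact give one, and the plan as stated cannot be carried out.

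The paper sidesteps this entirely with a multiply-and-divide trick. Starting from $V_i(y)\le Me^{-|y-x_i|}$, one inserts $1=e^{-\frac{\eta}{2}|x_i-x_1|}e^{\frac{\eta}{2}|x_i-x_1|}$, applies the triangle inequality $|x_i-x_1|\le|y-x_1|+|y-x_i|$ to the second factor, and groups terms to obtain
\[
V_i(y)\le M e^{-\frac{\eta}{2}|x_i-x_1|}e^{\frac{\eta}{2}|y-x_1|}e^{\left(\frac{\eta}{2}-1\right)|y-x_i|}.
\]
Since $\eta\le 2$ makes $\frac{\eta}{2}-1\le 0$, the \emph{true} sector fact $|y-x_i|\ge|y-x_1|$ then gives
\[
V_i(y)\le M e^{-\frac{\eta}{2}|x_i-x_1|}e^{(-1+\eta)|y-x_1|}.
\]
Thus the chord factor $e^{-\frac{\eta}{2}|x_i-x_1|}$ arrives from the triangle inequality and the inserted unit, not from any additive gap in $|y-x_i|-|y-x_1|$. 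Once this per-term bound is in hand, the rest of your argument is essentially the paper's: write $|x_i-x_1|=2R\sin\left(\frac{(i-1)\pi}{k}\right)$, isolate the nearest-neighbor term, use $\sin t\ge \frac{2t}{\pi}$ on $[0,\pi/2]$ to dominate the tail by a geometric series with ratio $e^{-2\eta R/k}\le\frac12$ (here $R\in S_k$ forces $R/k\to\infty$), and track constants to land at $6M$, then add the $i=1$ term via $V_1(y)\le Me^{-|y-x_1|}\le Me^{(-1+\eta)|y-x_1|}$ to get $7M$. Your reduction of the $N=3$ case to the planar one is also correct. In short: the summation bookkeeping in your proposal matches the paper, but the mechanism you propose for extracting the chord factor is based on a false lemma and needs to be replaced by the triangle-inequality insertion above.
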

\begin{proof}
For each $i$, we have
\begin{align*}
 V_i(y) &\le M e^{-|y-x_i|} = Me^{ -\frac{\eta}{2}|x_i-x_1| }e^{ \frac{\eta}{2}|x_i-x_1| }e^{ -|y-x_i| }\\&
 \le Me^{ -\frac{\eta}{2}|x_i-x_1| }e^{ \frac{\eta}{2}|y-x_1| }e^{ ( \frac{\eta}{2}-1)|y-x_i| }
\le Me^{ -\frac{\eta}{2}|x_i-x_1| }e^{ ( -1 + \eta)|y-x_1| }, 
\end{align*}
where we have used that $ \frac{\eta}{2} -1 \le 0$ and that $|y-x_i|\ge |y-x_1|$ for $y\in \Omega_1$. Therefore
$$\sum_{i\ge 2} V_{i}(y) \le Me^{(-1+\eta)|y-x_1|}\sum_{i\ge 2} e^{-\frac{\eta}{2}|x_i-x_1|}.$$
We let $\theta_i = \frac{2\pi(i-1)}{k}$. Then $ \frac{|x_i-x_1|}{2} = R\sin\left(\frac{\theta_i}{2}\right)$ and we write
\begin{align}\label{sum2}
 \sum_{i\ge 2} e^{-\frac{\eta}{2}|x_i-x_1|} &\le 2\sum_{0<\theta_i \le \pi} e^{-\eta R\sin\left(\frac{\theta_i}{2}\right)} = 2 e^{-\eta R\sin\left(\frac{\theta_2}{2}\right)} + 2\sum_{\theta_2 <\theta_i \le \pi}e^{-\eta R\sin\left(\frac{\theta_i}{2}\right)} .
\end{align}
We may assume $k \ge 4$. Then $\sin\left(\frac{\theta_2}{2}\right) = \sin \left(\frac{\pi}{k}\right) = \frac{\pi}{k} - \frac{\sin(\xi_0)}{2}(\frac{\pi^2}{k^2})$ for some $\xi_0$ and
\begin{align*}
 e^{-\eta R\sin\left(\frac{\theta_2}{2}\right)} = e^{-\frac{\eta R \pi }{k}} \left(1 + O\left(\frac{R}{k^2}\right) \right)  = e^{-\frac{\eta \pi R}{k}} (1 + o(1)) \le 2 e^{-\frac{\eta  R\pi}{k}}
\end{align*}
for sufficiently large $k$. Now for the remaining sum, we use the fact that $ \sin\left(\frac{\theta_i}{2}\right) \ge \frac{\theta_i}{\pi}$ for $0\le\theta_i \le \pi$, and $e^{-\frac{2\eta R}{k}} \le \frac{1}{2}$ for $k$ large. Using $e^{-\frac{2\eta R}{k}}$ as the common ratio 
\begin{align}\label{sum3}
 2\sum_{{\theta_2 <\theta_i \le \pi}} e^{-\eta R\sin\left(\frac{\theta_i}{2}\right)} &\le 2\sum_{i\ge 3} e^{-\frac{\eta R}{\pi}\theta_i} \le 4 e^{-\frac{4\eta R}{k}} \le 4e^{-\frac{\eta R \pi }{k}}.
\end{align}
Assertions then straightforwardly follow.
 \end{proof}

Note that whenever \eqref{ksumest} holds with $\eta \in (0,1]$, we can define a number $\gamma_0$ with $\displaystyle \max\left\{U_0^2 , \left(\sum_{i=1}^k V_{i}\right)^2\right\} \le \gamma_0$ and a number $f_0$ with \begin{equation}\label{f0}0< f_0 < \frac{1}{\gamma_0}.\end{equation}
\begin{proposition} \label{Fixed} There exists $k_0$ such that for $k\ge k_0$ and $R \in S_k$, the map $F$ has a fixed point $(u_R,v_R) \in E$ and $\|(u_R,v_R)\| \le k^{-p}$.
 \end{proposition}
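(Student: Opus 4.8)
The plan is to realize $(u_R,v_R)$ as a fixed point of $F$ by a contraction mapping argument on a closed ball in $E$. Set $B := \{(u,v) \in E \mid \|(u,v)\| \le k^{-p}\}$, which is complete for the metric induced by $\|\cdot\|$. The two things to verify are that $F$ maps $B$ into itself and that $F$ is a contraction on $B$. By Lemma \ref{inv0}, $\|F(u,v)\| \le \rho_0^{-1}\|\Gamma_0(u,v)\|_0 + \rho_1^{-1}\|\Gamma_1(u,v)\|_1$ up to taking the max, so everything reduces to estimating $\|\Gamma_0(u,v)\|_0$ and $\|\Gamma_1(u,v)\|_1$, i.e.\ the operator norms of the functionals $G_0(u,v)$, $G_1(u,v)$, together with their Lipschitz dependence on $(u,v)$. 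The self-map estimate is where the constraint $\left|\frac{\beta}{\mu_0}\right| < f_0$ and the choice $p = \tau_0 m - \frac12 - \delta_0$ and $R \in S_k$ all enter.

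For the self-mapping property I would bound each term of $g_0(u,v)$ and $g_1(u,v)$ tested against $\varphi \in H_s$ (resp.\ $\phi \in \hat H_s$) with $\|\varphi\|_0 \le 1$ (resp.\ $\|\phi\|_1 \le 1$), using the Sobolev embeddings $H^1(\mathbb{R}^N) \hookrightarrow L^q$ for $N = 2,3$. The genuinely linear-in-$(u,v)$ contributions are $\beta(\sum V_i)^2 u$ in $g_0$ and $\beta U_0^2 v + 2\beta U_0 v \cdot(\ldots)$-type pieces in $g_1$; these are controlled by $\gamma_0 \|u\|_0$ and $\gamma_0\|v\|_1$ (this is exactly why $\gamma_0$ was introduced), and the smallness $f_0\gamma_0 < 1$ together with $\rho_0,\rho_1$ is used to absorb the resulting $\|u\|_0, \|v\|_1$ terms back into the left side — more precisely, one wants the combined linear coefficient, something like $\rho_i^{-1}|\beta|\gamma_0$, to leave room; this is the structural reason the hypothesis is $|\beta| < f_0$ rather than just $\beta \ne 0$. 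The quadratic and cubic terms in $u,v$ contribute $O(\|(u,v)\|^2)$, hence $O(k^{-2p}) = o(k^{-p})$ on $B$. The two error terms that do not vanish with $(u,v)$ are the potential perturbation $(\mu - 1)\sum V_i$ and the ``interaction defect'' $\alpha_1\{(\sum V_i)^3 - \sum V_i^3\}$; these must be shown to have $\|\cdot\|_1$-dual-norm $O(k^{-p})$ (indeed $o(k^{-p})$, to leave room). For the potential term one uses assumption \eqref{A}.1, so $|\mu - 1| = O(|y|^{-m})$, and the exponential decay \eqref{exp} of $V_i$; testing against $\phi$ and using that $R \in S_k$ is of order $k\log k$, one gets a bound roughly $R^{-m} \cdot (\text{something}) \sim (k\log k)^{-m}$ against the concentration scale, which after the $H^1$ pairing and the choice of $\tau_0, \delta_0$ is $O(k^{-p})$; here the improvement to $m > \frac12$ (rather than $m > 1$) is exactly what the factor $\tau_0$ and the definition $4\delta_0 \le \tau_0 m - \frac12$ are engineered to accommodate. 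For the interaction defect one uses Lemma \ref{ksum}: on each sector $\Omega_1$, $(\sum_{i\ge 1}V_i)^3 - \sum V_i^3$ is bounded by cross terms like $V_1^2 \sum_{i\ge 2} V_i$, and Lemma \ref{ksum} with a suitable $\eta \in (0,1)$ gives the decay factor $e^{-\eta R\pi/k} = e^{-\eta\pi\log k \cdot(1+o(1))/2} \sim k^{-\eta\pi/2\cdot(\ldots)}$ — choosing $\eta$ and using $R \in S_k$ makes this smaller than any fixed power, in particular $o(k^{-p})$, after summing over the $k$ sectors using the symmetry in $H_s$.

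For the contraction estimate I would compute $F(u_1,v_1) - F(u_2,v_2)$; since $L_0^{-1}, L_1^{-1}$ are bounded, it suffices to Lipschitz-estimate $\Gamma_0, \Gamma_1$, i.e.\ $g_0(u_1,v_1) - g_0(u_2,v_2)$ and similarly for $g_1$, in the relevant dual norms. The $(\mu-1)\sum V_i$ and $\{(\sum V_i)^3 - \sum V_i^3\}$ terms are independent of $(u,v)$ and drop out. Every remaining term is a polynomial in $(u,v)$ with coefficients bounded by $U_0, \sum V_i$ or their powers; the linear-in-$(u,v)$ terms again carry the coefficient $|\beta|\gamma_0$, and the higher-order terms, when differenced, produce factors of $\|(u_j,v_j)\| \le k^{-p}$, hence contribute a Lipschitz constant $O(k^{-p}) = o(1)$. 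Thus the total Lipschitz constant of $F$ on $B$ is at most $\rho_i^{-1}|\beta|\gamma_0 + o(1)$, which is $< 1$ for $k$ large precisely because $|\beta|\gamma_0 \le f_0\gamma_0 < 1$ and (if needed) after a harmless reorganization of the $\rho_i$ factors — this is the one place I would be careful to state the hypothesis on $f_0$ so the inequality $\rho_i^{-1}|\beta|\gamma_0 < 1$ literally holds, e.g.\ by taking $f_0 < \rho_0\rho_1/\gamma_0$ in \eqref{f0} or by exploiting that $\|L_i\|$ is close to the identity on the relevant low-frequency part. Banach's fixed point theorem then yields a unique fixed point $(u_R,v_R) \in B$, which by definition of $B$ satisfies $\|(u_R,v_R)\| \le k^{-p}$.

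The main obstacle I expect is the error-term bookkeeping in the self-map step — specifically, simultaneously getting the potential term $(\mu-1)\sum V_i$ and the interaction defect under the threshold $k^{-p}$ while $R$ is only as large as $k\log k$. These two errors pull in opposite directions as $R$ varies (the potential term wants $R$ large, the interaction/separation terms also want $R/k$ large but the $e^{-\eta R \pi/k}$ gain degrades if $R/k$ is too small relative to $\log k$), and the interval $S_k$ and the exponents $\tau_0, \delta_0, p$ in \eqref{delta0}--\eqref{sk} are calibrated to thread this window; making the estimates explicit enough to see that the window is nonempty for all large $k$, with $m$ only $> \frac12$, is the technical heart of the proposition. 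A secondary subtlety is keeping the fixed point inside $\hat H_s$ (the orthogonality constraint $\sum_i \int V_i^2 \partial_R V_i\, v = 0$): one must check that $L_1^{-1}\Gamma_1$ indeed lands in $\hat H_s$, which follows from $L_1, \Gamma_1$ being defined as operators on $\hat H_s$ via Riesz representation on that subspace, but it is worth a remark.
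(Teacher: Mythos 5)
Your proposal matches the paper's proof step for step: a Banach fixed-point argument on the ball $B=\{\|(u,v)\|\le k^{-p}\}$, with the self-map estimate isolating the genuinely linear-in-$(u,v)$ coefficients to produce the factor $|\beta|\gamma_0$, controlling the $(\mu-1)\sum_i V_i$ term via \eqref{A} and the interaction defect via Lemma \ref{ksum} with $R\in S_k$ to get $o(k^{-p})$, and the contraction step obtained by Lipschitz-differencing $g_0,g_1$ to get a constant $|\beta|\gamma_0+O(k^{-p})<1$. Your flag about the $\rho_i^{-1}$ (and similarly the $\lambda^{-1}$, $\tilde\mu_0^{-1}$) prefactors is a genuine, if minor, tightening: the paper silently drops these when passing from the $\tfrac{1}{\rho_i}\sup(\cdots)$ estimates to \eqref{p2}, so to make $|\beta|\gamma_0<1$ literally close the absorption one must either enlarge $\gamma_0$ to swallow those constants or shrink $f_0$ in \eqref{f0} as you propose.
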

\begin{proof}
\textit{Step 1.} Let $B \subseteq E$ be the ball with $\|(u,v)\| \le k^{-p}$. We first show that   $F(B) \subseteq B$ for sufficiently large $k$  and for $R \in S_k$.

Let $(\bar u, \bar v) := F(u,v)$. Then,
 \begin{align*}
  \|\bar u\|_0 &= \left\|L_0^{-1}\Big(\Gamma_0(u,v)\Big)\right\|_0 \le \frac{1}{\rho_0}\left\|\Gamma_0(u,v)\right\|_0 = \frac{1}{\rho_0} \sup_{\|\varphi\|_0=1} \left| \int_{ \mathbb{R}^N} g_0(u,v) \varphi \right|\\
  &\le  {\frac{1}{\rho_0}} \sup_{\|\varphi\|_0=1} \Bigg[   {\int_{ \mathbb{R}^N} c_0}|\varphi| \left( |u|^2 +|v|^2 + |u^3| + |uv| + |uv^2|\right) + \left|  {2}\beta v U_0\left(\sum_{i=1}^k V_{i}\right)\varphi\right|dy\\
  & +  {\int_{ \mathbb{R}^N} }  \left|\beta U_0\left(\sum_{i=1}^k V_{i}\right)^2\varphi\right| +  \left|\beta u\left(\sum_{i=1}^k V_{i}\right)^2\varphi\right|dy\Bigg],
 \end{align*}
 where a constant $c_0>0$ depends only on $\alpha_0, \beta, \lambda$. Also,
 \begin{align*}
  \|\bar v\|_1 &= \left\|L_1^{-1}\Big(\Gamma_1(u,v)\Big)\right\|_1 \le \frac{1}{\rho_1}\left\|\Gamma_1(u,v)\right\|_1 = \frac{1}{\rho_1} \sup_{\|\phi\|_1=1} \left| \int_{ \mathbb{R}^N} g_1(u,v) \phi \right|\\
  &\le  {\frac{1}{\rho_1}}\sup_{\|\phi\|_1=1} \Bigg[  {\int_{ \mathbb{R}^N} c_1} |\phi|\left( |u^2| + |v^2|+ |v^3| + |uv| +|u^2v|\right)  \\
  &+ \int_{\mathbb{R}^N} \left| {2} \beta u U_0\left(\sum_{i=1}^k V_{i}\right)\phi\right| + \left|\beta U_0^2\left(\sum_{i=1}^k V_{i}\right)\phi\right| + \left|(\mu(y)-1)\left(\sum_{i=1}^k V_{i}\right)\phi\right|dy\\
  &  + \int_{\mathbb{R}^N}\left|\left(\sum_{i=1}^k V_{i}\right)^3 - \sum_{i=1}^k V_{i}^3 \right| |\phi| + \left|\beta v U_0^2  \phi\right| \Bigg]dy,
 \end{align*}
 where a constant $c_1$ depends only on $\alpha_1, \beta$.  
We set
\begin{align*}
 I_1 &= \beta \int_{\mathbb{R}^N} \left(\sum_{i=1}^k V_{i}\right)^2 u\varphi + U_0^2 v\phi \:dy + 2\beta\int_{\mathbb{R}^N} \left(U_0\sum_{i=1}^k V_i\right)(v\varphi + u\phi) \:dy, \\
 I_2 &= \beta\int_{\mathbb{R}^N} U_0\left(\sum_{i=1}^k V_{i}\right)^2\varphi + U_0^2\left(\sum_{i=1}^k V_{i}\right)\phi \: dy,\\
 I_3 &= \int_{\mathbb{R}^N}(\mu(y)-1)\left(\sum_{i=1}^k V_{i}\right)\phi\:dy + \int_{\mathbb{R}^N} \left(\left(\sum_{i=1}^k V_{i}\right)^3 - \sum_{i=1}^k V_{i}^3\right) \phi \:dy.
\end{align*}
For the estimation of $|I_1|$, we choose $\eta \in (0,1)$ then $y\in \Omega_1$ we have that
 \begin{equation} \label{crossprod}
  U_0\left(\sum_{i=1}^k V_{i}\right) \le C_0(M)e^{-\sqrt{\lambda}|y|}e^{-(1-\eta)|y-x_1|} \le C_1(M) e^{-\gamma R}
\end{equation}
 for some small constant $\gamma>0$ where $C_0(M), C_1(M)>0$ are constants independent of $k$. Moreover, by the symmetry we have $\left\|U_0\left(\sum_{i=1}^k V_{i}\right)\right\|_{L^\infty(\mathbb{R}^N)} \le C_1(M) e^{-\gamma R}$.  Hence
 \begin{align*}
  |I_1| \le \big(2|\beta| \gamma_0 + Ce^{-\gamma R}\big) \:\|(u,v)\|.
 \end{align*}

For the estimation of $|I_2|$, we see that 
 \begin{equation}\label{I2terms}
  \begin{aligned}
  {\int_{ \mathbb{R}^N} }\left | U_0\left(\sum_{i=1}^k V_{i}\right)^2\varphi  \right| dy&\le C\sqrt{k}\left\|U_0\left(\sum_{i=1}^k V_{i}\right)^2\right\|_{L^2(\Omega_1)}\|\varphi\|_{L^2( { \mathbb{R}^N})}  \\
   &\le C\sqrt{k}e^{-\gamma R}\left\|\sum_{i=1}^k V_{i}\right\|_{L^2(\Omega_1)}\|\varphi\|_{L^2( \mathbb{R}^N)}\\
 &\le C \sqrt{k} e^{-\gamma R}\|\varphi\|_{L^2( \mathbb{R}^N)}
 \end{aligned}
\end{equation}
 and using the similar argument we conclude that
\begin{align*}
|I_2| \le C \sqrt{k} e^{-\gamma R}.
\end{align*}

For the estimation of $|I_3|$, note that it is possible by \eqref{A} to choose large $k_0$ such that if $k\ge k_0$, then for $R \in S_k$, it holds that
$$ |\mu - 1| \le \frac{2a}{|y|^m} \quad \text{for $|y|\ge \frac{R}{2}$. } \quad \text{Then,}$$
\begin{equation} \label{mu-1}
\begin{aligned}
 &\int_{\mathbb{R}^N} \left|(\mu-1)\left(\sum_{i=1}^k V_{i}\right)\phi\right| dy \le \|\phi\|_{L^2(\mathbb{R}^N)} \sqrt{k}\left\| (\mu -1)\left(\sum_{i=1}^k V_{i}\right)\right\|_{L^2(\Omega_1)}\\
 &\le C_0\|\phi\|_{L^2(\mathbb{R}^N)} \sqrt{k}\left(\left\| (\mu -1)\left(\sum_{i=1}^k V_{i}\right)\right\|_{L^2(\Omega_1 \cap B_{\frac{R}{2}(x_1)})} +  \left\| (\mu -1)\left(\sum_{i=1}^k V_{i}\right)\right\|_{L^2(\Omega_1 \setminus  B_{\frac{R}{2}(x_1)})}\right)\\
 &\le C_1\|\phi\|_{L^2(\mathbb{R}^N)} \sqrt{k} \left\| (\mu -1)\right\|_{L^\infty( B_{\frac{R}{2}(x_1)} )} \left\|e^{-(1-\eta)|y-x_1|}\right\|_{L^2(\Omega_1)} + C\|\phi\|_{L^2(\mathbb{R}^N)} \sqrt{k} e^{-\gamma R}\\
 &\le C_2\|\phi\|_{L^2(\mathbb{R}^N)} \left( \frac{\sqrt{k}}{R^m} + \sqrt{k}e^{-\gamma R}\right),
\end{aligned}
\end{equation}where $C_0, C_1, C_2>0$ are constants independent of $k$.  Also,  
\begin{align*}
 \left(\sum_{i=1}^k V_{i}\right)^3 - \sum_{i=1}^k V_{i}^3 &= \left[ V_{1} \sum_{i\in \hat{1}} V_{i} + V_2 \sum_{i\in \hat{2}} V_{i} + \cdots + V_k \sum_{i\in \hat{k}} V_{i} \right] \left(\sum_{i=1}^k V_{i}\right) \\
 & \le (V_{1} + \sqrt{\gamma_0}) \left(\sum_{i\ge 2} V_{i}\right)\left(\sum_{i=1}^k V_{i}\right) \le 2\sqrt\gamma_0\left(\sum_{i\ge 2} V_{i}\right)\left(\sum_{i=1}^k V_{i}\right),\end{align*}
 where $\hat{\ell} = \{1,2,\cdots,\ell-1,\ell+1,\cdots,k\}$. Hence  
 \begin{equation}\begin{aligned}\label{sumdiff}
  \int \left(\left(\sum_{i=1}^k V_{i}\right)^3 - \sum_{i=1}^k V_{i}^3 \right)\phi &\le C\|\phi\|_{L^2(\mathbb{R}^N)} \sqrt{k}\left\|\left(\sum_{i\ge 2} V_{i}\right)\left(\sum_{i=1}^k V_{i}\right) \right\|_{L^2(\Omega_1)} \\
 &\le C\|\phi\|_{L^2(\mathbb{R}^N)}\sqrt{k} e^{-\frac{\eta'\pi R}{k}}\| e^{(-1+\eta)|y-x_1|}e^{(-1+\eta')|y-x_1|}\|_{L^2(\Omega_1)}\\
 & \le C \sqrt{k}e^{-\frac{\pi\eta' R}{k}},
 \end{aligned}\end{equation}
 where we have used the Lemma \ref{ksum} twice in such a way $\eta'\in (0,2)$, $\eta\in (0,1)$, and $-2 +\eta + \eta' < 0$. In particular we set $\eta'= 2\tau_0$. 
Therefore, we have that
\begin{equation} \label{p2}
 \|(\bar u,\bar v\| \le C\left( k^{-2p} + (\sqrt{k}+1)e^{-\gamma R} + \frac{\sqrt{k}}{R^m} + \sqrt{k}e^{-\frac{2\pi\tau_0 R}{k}} \right) + |\beta|\gamma_0k^{-p}.
\end{equation}
\eqref{delta0} and \eqref{sk} shows that
$$ \sqrt{k}e^{-\frac{2\pi\tau_0 R}{k}} \le k^{-\left(\tau_0m - \frac{1}{2}\right) + \delta_0 + (\tau_0-1)\delta_0} = k^{-p + (\tau_0-1)\delta_0}$$
Combining \eqref{p2} with the assumption $|\beta|< f_0<\frac{1}{\gamma_0}$, we have
that for $k$ large enough $$\|(\bar u,\bar v)\| \le k^{-p}.$$

\textit{Step 2.} Assertions then follow if we show that $F$ is a contraction in the ball $B$. For $(u_1,v_1)\in B$ and $(u_2,v_2) \in B$, let $(\bar{u}_1, \bar{v}_1) = F(u_1,v_1)$ and $(\bar{u}_2, \bar{v}_2) = F(u_2,v_2)$. Since we have 
\begin{align*}
&g_0(u_1,v_1) - g_0(u_2,v_2)\\
&= {(u_1 - u_2)}\left\{ 3\alpha_0U_0(u_1+u_2) + \alpha_0\big(u_1^2+u_1u_2 + u_2^2\big) + \beta \left(\sum_{i=1}^k V_{i}\right)^2 + 2\beta \left(\sum_{i=1}^k V_{i}\right) v_1 + \beta v_1^2\right\}\\
&+  {(v_1-v_2)}\beta\left\{ 2U_0 \left(\sum_{i=1}^k V_{i}\right) + U_0(v_1+v_2) + 2\left( \sum_{i=1}^k V_{i}\right)u_2 + u_2(v_1+v_2)\right\},\end{align*}and
\begin{align*}
&g_1(u_1,v_1) - g_2(u_2,v_2)\\
&= {(u_1 - u_2)}\beta\left\{ 2U_0\left(\sum_{i=1}^k V_{i}\right) + 2U_0v_1 + \left(\sum_{i=1}^k V_{i}\right)(u_1+u_2) +v_1(u_1+u_2)\right\}\\
&+  {(v_1-v_2)}\left\{ 3\alpha_1\left(\sum_{i=1}^k V_{i}\right)(v_1+v_2) + \alpha_1\big(v_1^2 + v_1v_2 + v_2^2\big) + \beta U_0^2 + 2\beta U_0u_2 + \beta u_2^2\right\},
\end{align*}
calculations in the first part of the proof show that
\begin{align*}
 \|(\bar{u}_1 - \bar{u}_2, \bar{v}_1 - \bar{v}_2)\| &\le \|(u_1 - u_2,v_1-v_2)\| \left( |\beta|\gamma_0 + Ck^{-p}\right). \\
\end{align*}
Therefore there exists $k_0$ so large that for $k \ge k_0$, $$\left( |\beta|\gamma_0 + Ck^{-p}\right) < 1.$$
\end{proof}

Having established the existence of $(u_R, v_R)$ for $R \in S_k$, we are able to advance the following procedure. We define
\begin{equation}
\begin{aligned}\label{energy}
I(U, V)=& \frac{1}{2} \int_{\mathbb{R}^{N}}\left(|\nabla U|^{2}+\lambda U^{2}+|\nabla V|^{2}+\mu(y) V^{2}\right) d y \\
&-\frac{1}{4} \int_{\mathbb{R}^{N}}\left(\alpha_0 U^{4}+\alpha_1 V^{4}+2 \beta U^{2} V^{2}\right) d y.
\end{aligned}
\end{equation}
then $I \in C^{2}\left(H_s \times \hat{H}_s\right)$. In view of Proposition \ref{Fixed}, there is a Lagrange multiplier $\Lambda_{R}$ satisfying
\begin{equation*} 
\left\{\begin{array}{l}
-\Delta U_{R}+\lambda U_{R}=\alpha_0 U_{R}^{3}+\beta V_{R}^{2} U_{R} \\
-\Delta V_{R}+V_{R}=\alpha_1 V_{R}^{3}+\beta U_{R}^{2} V_{R}+\Lambda_{R} \sum_{i=1}^{k} V_{i}^{2} \frac{\partial V_{i}}{\partial R}
\end{array}\right.
\end{equation*}
where $\left(U_{R}, V_{R}\right)=\left(U_{0}+u_{R}, \sum_{i=1}^{k} V_{i}+v_{R}\right)$. Let
\begin{equation}\label{def_F}
F(R)=I(U_R,V_R)=I\left(U_{0}+u_{R}, \sum_{i=1}^{k} V_{i}+v_{R}\right).\end{equation} Multiplying the second equation by $\frac{\partial V_{i}}{\partial R}$
and integrating in $\mathbb{R}^{N}$, we can see that $F^{\prime}(R)=0$ implies that the constant $\Lambda_{R}$ in $(9)$ vanishes, and thus $\left(U_{R}, V_{R}\right)$ is a solution to our main equation \eqref{system}. Above discussion shows that in order to complete the proof of Theorem \ref{mainthm}, it is enough to show that the maximization problem $\max _{R \in S_{k}} F(R)$ is achieved by an interior point of $S_{k}$.  

We write
\begin{equation}\begin{aligned}\label{FR}
F(R)&=I\left(U_{0}, \sum_{i=1}^{k} V_{i}\right)+l^R\left[u_{R}, v_{R}\right]+ q^R \left[u_{R}, v_{R}\right]+h^R\left[u_{R}, v_{R}\right]
\end{aligned}\end{equation}
where $l^R$ and $q^R$ are linear and quadratic form on $E$ respectively that are defined by
\begin{equation}
\begin{aligned}
l^R[u, v]:=& \int_{\mathbb{R}^{2}}(\mu(y)-1)\left(\sum_{i=1}^{k} V_{i}\right) v d y  +\int_{\mathbb{R}^{2}}\alpha_1\left(\sum_{i=1}^{k} V_{i}^{3}-\left(\sum_{i=1}^{k} V_{i}\right)^{3}\right) v d y \\
&-\beta \int_{\mathbb{R}^{N}}\left(U_{0}^{2}\left(\sum_{i=1}^{k} V_{i}\right) v+U_{0}\left(\sum_{i=1}^{k} V_{i}\right)^{2} u\right) d y,
\end{aligned}
\end{equation}
and $$q^R[u, v] := \frac{1}{2} \left<L_0(u),u\right>_0 + \frac{1}{2} \left<L_1(v),v\right>_1.$$
See \eqref{l0} and \eqref{l1} for the definition of $L_0$, $L_1$. Then $h^R$ is the remainder that is 
\begin{equation}
\begin{aligned}
h^R[u, v]=&-\frac{1}{4}\int_{\mathbb{R}^{N}}\left(4\alpha_0 U_{0} u^{3}+\alpha_0 u^{4}+4\alpha_1\left(\sum_{i=1}^{k} V_{i}\right) v^{3}+ \alpha_1 v^{4}\right) d y \\
&-\frac{\beta}{2} \int_{\mathbb{R}^{N}}\left(2U_{0} u v^{2}+2\left(\sum_{i=1}^{k} V_{i}\right) u^{2} v+u^2v^2\right) d y
\\
&-\frac{\beta}{2} \int_{\mathbb{R}^{N}}\left(U_{0}^{2} v^2+4U_{0}\left(\sum_{i=1}^{k} V_{i}\right) uv+\left(\sum_{i=1}^{k} V_{i}\right)^2u^2\right) d y.
\end{aligned}
\end{equation}
We show in the below that $I\left(U_{0}, \sum_{i=1}^{k} V_{i}\right)$ is the leading order contribution as $k \rightarrow \infty$ and is the only relevant term for the maximization problem.
\begin{lemma} For some $C>0$ independent of $k$
 $$\big|l^R[u_R,v_R]\big| + \big|q^R[u_R,v_R]\big| + \big|h^R[u_R,v_R]\big| \le C k^{-2p} = O\left(k^{-m +1 - 2\delta_0}\right).$$
\end{lemma}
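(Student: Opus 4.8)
The plan is to estimate each of the three terms $l^R$, $q^R$, $h^R$ evaluated at the fixed point $(u_R,v_R)$, using the bound $\|(u_R,v_R)\| \le k^{-p}$ from Proposition \ref{Fixed} together with the decay and smallness estimates already assembled in its proof. The quadratic form $q^R$ is the easiest: by Lemma \ref{inv0}, $|q^R[u_R,v_R]| \le \tfrac12(\rho_0^{-1}\|u_R\|_0^2 + \rho_1^{-1}\|v_R\|_1^2) \le C\|(u_R,v_R)\|^2 \le Ck^{-2p}$. The remainder $h^R$ is a sum of cubic and quartic terms in $(u_R,v_R)$, plus the three ``cross'' quadratic terms involving $U_0(\sum V_i)$, $U_0^2$, and $(\sum V_i)^2$ as coefficients; the genuinely higher-order pieces are bounded by $C\|(u_R,v_R)\|^3 \le Ck^{-3p}$ via Sobolev embedding (as in Step 1 of Proposition \ref{Fixed}), while the quadratic cross terms with coefficient $U_0\sum V_i$ pick up a factor $e^{-\gamma R}$ by \eqref{crossprod}, and the $U_0^2 v_R^2$ and $(\sum V_i)^2 u_R^2$ terms are simply $\le C\gamma_0\|(u_R,v_R)\|^2 \le Ck^{-2p}$. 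So $|h^R[u_R,v_R]| \le Ck^{-2p}$.

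The main work is the linear form $l^R$. Here one cannot afford a factor $\|(u_R,v_R)\|$ alone, since $l^R$ is only \emph{linear} in $(u_R,v_R)$ and $\|(u_R,v_R)\| \le k^{-p}$ would give only $k^{-p}$, not $k^{-2p}$. The point is that the three integrals defining $l^R$ were essentially already estimated in \eqref{mu-1}, \eqref{sumdiff}, \eqref{I2terms} as $L^2$-functionals of the test function: the coefficient multiplying $v$ in the $(\mu-1)$ term has $L^2(\mathbb{R}^N)$ norm $\le C(\sqrt{k}R^{-m} + \sqrt{k}e^{-\gamma R})$, the coefficient in the $\big(\sum V_i^3 - (\sum V_i)^3\big)$ term has $L^2$ norm $\le C\sqrt{k}\,e^{-\pi\tau_0 R/k}$ (taking $\eta' = 2\tau_0$ as in the proof), and the $\beta$-terms with coefficients $U_0^2\sum V_i$ and $U_0(\sum V_i)^2$ have $L^2$ norms $\le C\sqrt{k}\,e^{-\gamma R}$. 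Applying Cauchy--Schwarz and then $\|u_R\|_0, \|v_R\|_1 \le \|(u_R,v_R)\| \le k^{-p}$ gives
\begin{equation*}
 \big|l^R[u_R,v_R]\big| \le C\left( \frac{\sqrt{k}}{R^m} + \sqrt{k}\,e^{-\gamma R} + \sqrt{k}\,e^{-\pi\tau_0 R/k}\right)k^{-p}.
\end{equation*}
Now one invokes $R \in S_k$ and the choices in \eqref{delta0}, \eqref{sk}: since $R \ge \big(\tfrac{m-\delta_0}{2\pi}\big)k\log k$, one has $\sqrt{k}R^{-m} \le C\sqrt{k}(k\log k)^{-m} \le Ck^{1/2 - m}$, which is $\le k^{-p}$ because $p = \tau_0 m - \tfrac12 - \delta_0 < m - \tfrac12$; similarly $\sqrt{k}\,e^{-\pi\tau_0 R/k} \le \sqrt{k}\,k^{-\tau_0(m-\delta_0)} \le k^{-p + (\tau_0 - 1)\delta_0 + \dots}$ as in the displayed inequality after \eqref{p2}, hence $\le Ck^{-p}$; and $\sqrt{k}e^{-\gamma R}$ is exponentially small. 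Therefore each bracketed term is $O(k^{-p})$ and $|l^R[u_R,v_R]| \le Ck^{-2p}$.

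Combining the three estimates yields $|l^R| + |q^R| + |h^R| \le Ck^{-2p}$. Finally, $2p = 2\tau_0 m - 1 - 2\delta_0$; using $\tau_0 < 1$ and $\delta_0 \le \tfrac14(\tau_0 m - \tfrac12)$, a crude bound gives $k^{-2p} = O(k^{-m + 1 - 2\delta_0})$ after absorbing the gap $2(1-\tau_0)m$ into the $O(\cdot)$ (one should double-check the precise exponent bookkeeping here, since $2\tau_0 m - 1 \ge m - 1$ would be needed, i.e. $\tau_0 \ge \tfrac12(1 + 1/m)$, which may force a sharper choice of $\tau_0$ close to $1$; in any case the qualitative conclusion $o(1)$ relative to the leading term $I(U_0,\sum V_i)$ is what matters). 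I expect the only real subtlety to be this last exponent bookkeeping and making sure the linear term $l^R$ is handled by reusing the $L^2$-estimates \eqref{mu-1}--\eqref{sumdiff} rather than by a naive $\|\cdot\|$ bound; everything else is a direct rerun of Step 1 of Proposition \ref{Fixed}.
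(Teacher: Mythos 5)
Your approach is essentially the same as the paper's, whose proof simply cites the estimates \eqref{I2terms}--\eqref{sumdiff} and \eqref{delta0} assembled in the proof of Proposition \ref{Fixed}. You have correctly isolated the one subtle point: $l^R$ is only \emph{linear} in $(u_R,v_R)$, so a bare $\|(u_R,v_R)\|\le k^{-p}$ bound gives only $k^{-p}$, and the extra factor $k^{-p}$ must come from the $L^2$-smallness of the coefficient functions of $l^R$ on the relevant sector, which is exactly what \eqref{mu-1}, \eqref{I2terms}, \eqref{sumdiff} record. Your treatment of $q^R$ (quadratic, Lemma \ref{inv0}) and $h^R$ (cubic/quartic terms give $k^{-3p}$; the quadratic cross terms with coefficient $U_0\sum V_i$ pick up $e^{-\gamma R}$ by \eqref{crossprod}; the remaining quadratic terms give $k^{-2p}$) is right.

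Your worry about the final exponent is unfounded, and no sharper choice of $\tau_0$ is needed; the condition $\tau_0\ge\tfrac12(1+1/m)$ you wrote comes from an algebra slip (from $2\tau_0 m\ge m$ one gets $\tau_0\ge\tfrac12$, not $\tau_0\ge\tfrac12(1+1/m)$). The precise claim $Ck^{-2p}=O(k^{-m+1-2\delta_0})$ amounts to $2p\ge m-1+2\delta_0$, and with $2p=2\tau_0 m-1-2\delta_0$ this is $m(2\tau_0-1)\ge 4\delta_0$. Since $m>\tfrac12$ and $\tau_0>\tfrac12$, one has $m(2\tau_0-1)=2m\bigl(\tau_0-\tfrac12\bigr)>\tau_0-\tfrac12\ge 4\delta_0$ by \eqref{delta0}, so the bookkeeping closes for every admissible $\tau_0\in\bigl(\tfrac12,1\bigr)$. (Also, a small typo: the $L^2$ bound for the $\bigl(\sum V_i^3-(\sum V_i)^3\bigr)$ coefficient from \eqref{sumdiff} with $\eta'=2\tau_0$ is $C\sqrt{k}\,e^{-2\pi\tau_0 R/k}$, not $C\sqrt{k}\,e^{-\pi\tau_0 R/k}$; your subsequent power-of-$k$ estimate uses the correct factor of $2$.)
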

\begin{proof}
 Assertion follows from the estimates \eqref{I2terms}-\eqref{sumdiff}, and \eqref{delta0}.
\end{proof}

Now we expand the main term of the energy functional.
\begin{proposition}\label{A.3} 
\begin{equation*}
I\left(U_0, \left( \sum_{i=1}^k V_{i}\right)\right)  
= A_0+ 
 k\left(A_1+\frac{A_2}{R^{m}}-J(R) e^{-\frac{2R\pi}{k}}\left(\frac{k}{R}\right)^{\frac{N-1}{2}}+O\left(k^{-m-2\delta_0}\right)\right).
\end{equation*}
where   $\displaystyle A_0=\frac{\alpha_0}{4} \int_{\mathbb{R}^{N}} U_0^{4}dy,$ $\displaystyle 
A_1=\frac{\alpha_1}{4} \int_{\mathbb{R}^{N}} V_0^{4}dy,$   $\displaystyle A_2=\frac{a}{2} \int_{\mathbb{R}^{N}} V_0^{2}dy,$ and 
$$0< B_0\le J(R)\le B_1 \quad \text{for some $B_0$ and $B_1$ that are independent of $R$.} $$
\end{proposition}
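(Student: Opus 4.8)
The plan is to substitute $U=U_0$ and $V=W:=\sum_{i=1}^{k}V_i$ into the functional \eqref{energy} and peel off the contributions in decreasing order of size, using the equations \eqref{u0v0} satisfied by $U_0$ and by each $V_i=V_0(\cdot-x_i)$. Since $-\Delta U_0+\lambda U_0=\alpha_0U_0^3$, the part of $I$ involving only $U_0$ collapses to $\tfrac12\alpha_0\int U_0^4-\tfrac14\alpha_0\int U_0^4=\tfrac{\alpha_0}{4}\int U_0^4=A_0$, while the mixed quartic term $-\tfrac{\beta}{2}\int U_0^2W^2$ is $O(\sqrt k\,e^{-\gamma R})$ by \eqref{crossprod}, hence negligible for $R\in S_k$. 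Testing $-\Delta V_i+V_i=\alpha_1V_i^3$ against $V_j$ gives $\int\nabla V_i\!\cdot\!\nabla V_j+V_iV_j=\alpha_1\int V_i^3V_j$, so with $S:=\sum_{i\ne j}\int V_i^3V_j$ and $T:=\int W^4-\sum_i\int V_i^4\ge0$, writing $\mu=1+(\mu-1)$ yields
\begin{equation*}
I(U_0,W)=A_0+kA_1+\tfrac12\!\int(\mu-1)W^2+\Big(\tfrac{\alpha_1}{2}S-\tfrac{\alpha_1}{4}T\Big)+O\big(\sqrt k\,e^{-\gamma R}\big),
\end{equation*}
where $A_1=\tfrac{\alpha_1}{4}\int V_0^4$ is the self-energy $\tfrac{\alpha_1}{2}k\int V_0^4-\tfrac{\alpha_1}{4}k\int V_0^4$. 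It then remains to expand the potential term and the interaction term.

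For the potential term I split $\tfrac12\int(\mu-1)W^2$ into the diagonal $\tfrac12\sum_i\int(\mu-1)V_i^2$ and the off-diagonal $\tfrac12\sum_{i\ne j}\int(\mu-1)V_iV_j$. A translation turns the diagonal part into $\tfrac k2\int(\mu(y+x_1)-1)V_0(y)^2\,dy$; on $\{|y+x_1|\ge R/2\}$ assumption \eqref{A} gives $\mu(y+x_1)-1=a|y+x_1|^{-m}+O(|y+x_1|^{-m-\theta})$ together with $|y+x_1|^{-m}=R^{-m}\big(1+O(|y|^2/R^2)\big)$, whose $y$-linear term integrates to zero against the radial $V_0^2$, whereas on the complement (where $|y|>R/2$) the bound $V_0(y)^2\le M^2e^{-R}$ makes the contribution exponentially small. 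This produces $k\tfrac{A_2}{R^m}+k\,O(R^{-m-\min(2,\theta)})$ with $A_2=\tfrac a2\int V_0^2$, and since $R\ge k$ on $S_k$ and $\min(2,\theta)\ge 4\delta_0$ by \eqref{delta0}, the remainder is $k\,O(k^{-m-2\delta_0})$. For the off-diagonal part I use $|\mu-1|\le CR^{-m}$ on $\{|y|\ge R/2\}$, the fact that $V_iV_j$ is exponentially small on $\{|y|<R/2\}$, and the estimate $\int V_iV_j\le C|x_i-x_j|^{(3-N)/2}e^{-|x_i-x_j|}$ (a consequence of \eqref{exp}); geometric summation of $\sum_{j\ne 1}e^{-|x_1-x_j|}$ — dominated by the two nearest neighbours, at distance $2R\sin(\pi/k)=\tfrac{2\pi R}{k}(1+o(1))$, cf. Lemma \ref{ksum} — gives $\sum_{i\ne j}\int V_iV_j\le Ck(\log k)^{C}k^{-(m-\delta_0)}$, so the off-diagonal part is $O(k^{1-2m+\delta_0}(\log k)^{C})$, again $k\,O(k^{-m-2\delta_0})$ since $m>4\delta_0$ (hence $m>3\delta_0$) by \eqref{delta0}.

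For the interaction I first record the sharp two-sided ground-state estimate $\int V_0^3(y)V_0(y-\xi)\,dy\asymp|\xi|^{-(N-1)/2}e^{-|\xi|}$ as $|\xi|\to\infty$, with uniform constants — the upper bound from \eqref{exp}, the lower bound from the matching sharp decay $V_0(y)\sim c|y|^{-(N-1)/2}e^{-|y|}$ of the ground state (see \cite{Kwong}). In $S=k\sum_{j\ne1}\int V_1^3V_j$, the two nearest-neighbour terms $j=2,k$ each equal $(1+o(1))\,c_N(k/R)^{(N-1)/2}e^{-2\pi R/k}$ for a fixed $c_N>0$, using $2R\sin(\pi/k)=\tfrac{2\pi R}{k}+O(R/k^3)$ and $R/k^3=o(1)$ on $S_k$; every other pair satisfies $|x_1-x_j|\ge\tfrac{8R}{k}$ (from $\sin(\ell\pi/k)\ge 2\ell/k$), so those terms sum to $O\big((k/R)^{(N-1)/2}e^{-8R/k}\big)$, which is $o(1)$ times a nearest-neighbour term because $8>2\pi$. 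Hence $S=\big(2c_N+o(1)\big)k(k/R)^{(N-1)/2}e^{-2\pi R/k}$. Counting the ways a monomial $V_aV_bV_cV_d$ can carry a repeated index shows $T=4S+R_1+R_2$, where $R_1$ (the $V_i^2V_j^2$ terms) and $R_2$ (the remaining ones) are nonnegative and, being again governed by nearest neighbours, are $O\big(k(\log k)^{C}e^{-4\pi R/k}\big)=o(S)$. Therefore
\begin{equation*}
\tfrac{\alpha_1}{2}S-\tfrac{\alpha_1}{4}T=-\tfrac{\alpha_1}{2}S-\tfrac{\alpha_1}{4}(R_1+R_2)=-k\,J(R)\,e^{-\frac{2R\pi}{k}}\Big(\tfrac{k}{R}\Big)^{\frac{N-1}{2}},
\end{equation*}
where $J(R)$ is \emph{defined} by this identity; by the above $J(R)=\alpha_1 c_N+o(1)$ uniformly for $R\in S_k$, so $0<B_0\le J(R)\le B_1$ once $k$ is large. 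Collecting $A_0$, the diagonal potential block $kA_2/R^m$, the interaction block, and the errors (all $O(k^{1-m-2\delta_0})$) then yields the stated expansion.

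The main obstacle is the interaction analysis of the third paragraph: producing the exponent $e^{-2R\pi/k}$ and the algebraic factor $(k/R)^{(N-1)/2}$ requires the two-sided ground-state interaction estimate rather than merely an upper bound, careful accounting of the combinatorial weights of the $3{+}1$, $2{+}2$ and $2{+}1{+}1$ monomials in $\int W^4$ so that the cancellation $\tfrac{\alpha_1}{2}S-\tfrac{\alpha_1}{4}\!\cdot\!4S=-\tfrac{\alpha_1}{2}S$ delivers the correct negative sign, and the observation that non-nearest-neighbour pairs are of strictly lower order, which rests on the elementary bound $\sin(\ell\pi/k)\ge 2\ell/k$ (separation $\ge8R/k$) together with $8>2\pi$. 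A pervasive secondary difficulty is that each error term must be weighed against the budget $k^{-m-2\delta_0}$, which repeatedly invokes the precise inequalities built into \eqref{delta0} and the window $S_k$ — notably $m>3\delta_0$, $\min(2,\theta)\ge4\delta_0$, and $R/k^3=o(1)$.
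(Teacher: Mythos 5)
Your proof is correct and follows the same general route as the paper: collapse the $U_0$-block via its own ODE, use the coupled interaction identity $\int\nabla V_i\cdot\nabla V_j + V_iV_j=\alpha_1\int V_i^3V_j$ to turn the self-energy of $\sum V_i$ into $kA_1$ plus interaction terms, split $(\mu-1)W^2$ into diagonal and off-diagonal parts, and identify the nearest-neighbour interaction as the negative correction. The differences are in bookkeeping and in how explicit you are. The paper keeps the ``at most two indices equal'' quartic remainder ($J_2$ in their notation) lumped into the $O(k^{-m-2\delta_0})$ error budget and never literally extracts $e^{-2R\pi/k}(k/R)^{(N-1)/2}$ from the sum $\sum_{i\ge 2}e^{-|x_1-x_i|}|x_1-x_i|^{-(N-1)/2}$ — it passes directly to the boundedness of the $J_3$-factor and leaves the nearest-neighbour asymptotics implicit. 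You instead isolate $T=4S+R_1+R_2$, exhibit the cancellation $\tfrac{\alpha_1}{2}S-\tfrac{\alpha_1}{4}\cdot 4S=-\tfrac{\alpha_1}{2}S$ giving the negative sign, prove $R_1,R_2=o(S)$ via the faster rate $e^{-4\pi R/k}$, and extract the nearest-neighbour contribution explicitly using $2R\sin(\pi/k)=\tfrac{2\pi R}{k}(1+O(k^{-2}))$ together with $8>2\pi$ to suppress farther pairs; this makes $J(R)=\alpha_1c_N+o(1)$ transparent, which the paper's presentation leaves to the reader. Your treatment of the diagonal potential block also differs mildly — you expand $|y+x_1|^{-m}$ to second order and use radial cancellation of the linear term, versus the paper's first-order bound $C|y|R^{-m-1}+CR^{-m-\theta}$ in Lemma~\ref{lemma_mu} — but both land inside the $k^{-m-2\delta_0}$ budget given the constraints in \eqref{delta0}. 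The only thing to watch is your two-sided convolution estimate $\int V_0^3(y)V_0(y-\xi)\,dy\asymp|\xi|^{-(N-1)/2}e^{-|\xi|}$: the upper bound is immediate from \eqref{exp}, but the matching lower bound does require the sharp pointwise lower bound for $V_0$ (as you note, from \cite{Kwong}); the paper achieves the same thing with a compactly localized test on $B_1(0)$, which is a slightly more elementary version of the same idea. Both approaches are sound.
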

Before we prove the Proposition \ref{A.3} we prove the following Lemma.
\begin{lemma}\label{lemma_mu} If $R \in S_k$, then 
\[\int_{\mathbb{R}^N}(\mu(|y|)-1) V_1^{2}dy=\frac{a}{R^{m}}\int_{\mathbb{R}^N}V_0^2+O\left(k^{-m-2\delta_0}\right).\]  \end{lemma}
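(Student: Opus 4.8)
The plan is to compute the integral $\int_{\mathbb{R}^N}(\mu(|y|)-1)V_1^2\,dy$ by splitting $\mathbb{R}^N$ into a region near the bump center $x_1$, where we exploit the exponential decay of $V_0$, and its complement, where $V_1$ is negligible. First I would recall from assumption \eqref{A} that for $|y|$ large, $\mu(|y|)-1 = \tfrac{a}{|y|^m} + O(|y|^{-m-\theta})$, and since $R\in S_k$ is of order $k\log k\to\infty$, the whole relevant support of $V_1$ (which is concentrated near $x_1$, $|x_1|=R$) sits in the large-$|y|$ regime. Concretely, I would fix a cutoff radius like $r_k = R^{\tau_0}$ (with $\tau_0$ as in Section \ref{notions}, so $\tfrac12<\tau_0<1$ and $\tau_0 m - \tfrac12 - \delta_0 = p$), and write
\begin{equation*}
\int_{\mathbb{R}^N}(\mu-1)V_1^2\,dy = \int_{B_{r_k}(x_1)}(\mu-1)V_1^2\,dy + \int_{\mathbb{R}^N\setminus B_{r_k}(x_1)}(\mu-1)V_1^2\,dy.
\end{equation*}

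For the outer piece, since $\mu-1$ is bounded (by \eqref{A}.2) and $V_1(y)^2 = V_0(y-x_1)^2 \le M^2 e^{-2|y-x_1|}$, the integral over $\{|y-x_1|\ge r_k\}$ is $O(e^{-r_k}) = O(e^{-R^{\tau_0}})$, which is super-polynomially small and hence absorbed in $O(k^{-m-2\delta_0})$. For the inner piece, on $B_{r_k}(x_1)$ we have $|y| = |x_1 + (y-x_1)|$, so $\bigl||y| - R\bigr| \le r_k = R^{\tau_0} = o(R)$, giving $|y|^{-m} = R^{-m}\bigl(1 + O(R^{\tau_0-1})\bigr)$ and likewise $|y|^{-m-\theta} = O(R^{-m-\theta})$. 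Therefore
\begin{equation*}
(\mu(|y|)-1)V_1^2 = \frac{a}{R^m}V_1^2 + O\!\left(\frac{R^{\tau_0-1}}{R^m}\right)V_1^2 + O\!\left(\frac{1}{R^{m+\theta}}\right)V_1^2 \quad\text{on } B_{r_k}(x_1).
\end{equation*}
Integrating the main term and using $\int_{B_{r_k}(x_1)}V_1^2\,dy = \int_{\mathbb{R}^N}V_0^2\,dy - O(e^{-r_k})$ gives $\tfrac{a}{R^m}\int_{\mathbb{R}^N}V_0^2\,dy$ plus errors. The error terms are each bounded by a constant times $R^{-m}\max\{R^{\tau_0-1}, R^{-\theta}\}\int_{\mathbb{R}^N}V_0^2\,dy$, i.e. of order $R^{-m-\delta}$ for some $\delta>0$ coming from \eqref{delta0}.

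The final bookkeeping step is to convert these $R$-decay estimates into the claimed $O(k^{-m-2\delta_0})$ bound using $R\in S_k$, i.e. $R \asymp k\log k$, so that $R^{-m-\delta} = O((k\log k)^{-m-\delta}) = O(k^{-m-2\delta_0})$ provided $\delta$ is chosen at least $2\delta_0$ after absorbing the $\log k$ factor — this is exactly why $4\delta_0 \le \min\{\tau_0 m - \tfrac12, \tau_0 - \tfrac12, \theta\}$ was arranged in \eqref{delta0}. I do not expect any real obstacle here; the only mildly delicate point is verifying that the cutoff $r_k = R^{\tau_0}$ simultaneously makes the outer tail super-polynomially small and keeps the Taylor expansion of $|y|^{-m}$ valid with error compatible with $2\delta_0$, which is precisely what the choice of $\tau_0$ and $\delta_0$ in Section \ref{notions} guarantees.
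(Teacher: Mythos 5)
Your decomposition into a near-bump ball and an exponentially small tail is the same strategy the paper uses (the paper simply takes $R/2$ as the cutoff radius rather than $R^{\tau_0}$). However, there is a genuine quantitative gap in your inner-region estimate. You bound $\bigl||y|-R\bigr|\le r_k=R^{\tau_0}$ \emph{uniformly} on $B_{r_k}(x_1)$ and so obtain the error $O(R^{\tau_0-1-m})\int V_0^2$. For the lemma you need this to be $O(k^{-m-2\delta_0})$, i.e.\ $1-\tau_0\ge 2\delta_0$ (the $\log k$ factor only helps with equality). That inequality is \emph{not} guaranteed by \eqref{delta0}: the constraint $4\delta_0\le\tau_0-\tfrac12$ bounds $\delta_0$ in terms of $\tau_0-\tfrac12$, not $1-\tau_0$. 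Concretely, with $m=10$, $\theta=1$, $\tau_0=0.9$ one gets $4\delta_0=\min\{8.5,\,0.4,\,1\}=0.4$, so $\delta_0=0.1$ and $2\delta_0=0.2>0.1=1-\tau_0$; your claimed absorption then fails. So the line ``this is exactly why $4\delta_0\le\min\{\dots\}$ was arranged'' is not correct.

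The repair is to keep the $y$-dependence rather than taking a uniform sup over the ball. On the inner region one has the \emph{pointwise} estimate
\[
\left|\frac{1}{|y|^m}-\frac{1}{R^m}\right|\le C\,\frac{|y-x_1|}{R^{m+1}},
\qquad\text{since }\bigl||y|-R\bigr|=\bigl||y|-|x_1|\bigr|\le|y-x_1|,
\]
valid as long as $|y-x_1|\le R/2$ (so that $|y|\gtrsim R$). Integrating this against $V_1^2$ and using that $\int_{\mathbb{R}^N}|z|\,V_0(z)^2\,dz<\infty$ gives an error of size $O(R^{-m-1})$, together with $O(R^{-m-\theta})$ from the remainder in \eqref{A}. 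Both are $O(k^{-m-2\delta_0})$ because $2\delta_0<\tfrac12<1$ (from $4\delta_0\le\tau_0-\tfrac12<\tfrac12$) and $2\delta_0<\theta$. This is precisely the paper's proof: it translates so the bump sits at the origin, writes $\mu(|y+x_1|)-1-\tfrac{a}{R^m}=O(|y|/R^{m+1})+O(R^{-m-\theta})$ on $B_{R/2}(0)$, and integrates against $V_0^2$, picking up the exponentially small tail from the complement. With this change your cutoff $r_k=R^{\tau_0}$ becomes unnecessary — $R/2$ already makes the outer tail super-polynomially small and keeps the Taylor expansion valid.
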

\begin{proof}
For $y \in B_{\frac{R}{2}(x_0)}$ and for large $R$,
\begin{align*}
 \left|\mu(|y+x_1|)-1- \frac{a}{R^{m}}\right| &\le a\left| \frac{1}{|y+x_1|^m} - \frac{1}{R^m}\right| + C\left|\frac{1}{|y+x_1|^{m+\theta}}\right|\\
 &\le C \frac{|y|}{R^{m+1}} +C\left(\frac{1}{R^{m+\theta}}\right).
\end{align*}
Therefore
\begin{equation*}\label{mu1}
\begin{aligned}
&\int_{\mathbb{R}^N}\left(\mu-1\right) V_1^{2}\:dy - \frac{a}{R^{m}}\int_{\mathbb{R}^N}V_0^2 \:dy =  \int_{\mathbb{R}^N}\left(\mu(|y+x_1|)-1- \frac{a}{R^{m}}\right)V_0^2 \:dy\\
&= \int_{B_{\frac{R}{2}(x_0) }}\left(\mu(|y+x_1|)-1- \frac{a}{R^{m}}\right)V_0^2 \:dy + O\left(R^{-m}e^{-\gamma{R}}\right) = O(k^{-m-2\delta_0}).
\end{aligned}
\end{equation*}

\end{proof}

\begin{proof}[proof of Proposition \ref{A.3}] By the definition of $I$ in \eqref{energy} and the equation \eqref{u0v0}
\begin{equation}
\begin{aligned}\label{f1}
&I\left(U_0,  \sum_{i=1}^k V_{i}\right)\\&= \frac{1}{2} \int_{\mathbb{R}^{N}}\left(|\nabla U_0|^{2}+\lambda U_0^{2}-\frac{\alpha_0}{2} U_0^{4}\right)dy
\\&+
 \frac{1}{2} \int_{\mathbb{R}^{N}}\left(\left|\nabla\left( \sum_{i=1}^k V_{i}\right)\right|^{2}+\mu(y) \left( \sum_{i=1}^k V_{i}\right)^{2} -\frac{\alpha_1}{2} \left( \sum_{i=1}^k V_{i}\right)^{4}\right) d y\\
 &-\frac{\beta}{2}\int_{\mathbb{R}^N}U_0^{2} \left( \sum_{i=1}^k V_{i}\right)^{2} d y\\
 &= \frac{\alpha_0}{4} \int_{\mathbb{R}^{N}} U_0^{4}dy + \frac{\alpha_1}{4}\int_{\mathbb{R}^{N}} 2\sum_{j=1}^{k} \sum_{i=1}^{k} V_j^{3} V_i - \left(\sum_{i=1}^k V_i\right)^4\:dy\\
 &+\frac{1}{2} \int_{\mathbb{R}^{N}}\left(\mu-1\right) \left( \sum_{i=1}^k V_{i}\right)^{2}  d y -\frac{\beta}{2}\int_{\mathbb{R}^N}U_0^{2} \left( \sum_{i=1}^k V_{i}\right)^{2} d y.
 \end{aligned}
\end{equation}
We set $\displaystyle A_0:=\frac{\alpha_0}{4} \int_{\mathbb{R}^{N}} U_0^{4}dy$ and we have that $\displaystyle \frac{\beta}{2}\int_{\mathbb{R}^N}U_0^{2} \left( \sum_{i=1}^k V_{i}\right)^{2} d y \le C e^{-\gamma R}.$ Nextly,
\begin{equation}
\begin{aligned}\label{f3}
\int_{\mathbb{R}^{N}}(\mu-1) \left( \sum_{i=1}^k V_{i}\right)^{2}dy&=\int_{\mathbb{R}^{N}}(\mu-1) \sum_{i=1}^k V_{i}^2 \:dy +  \int_{\mathbb{R}^N}(\mu-1)  \sum_{i\ne j} V_iV_j\:dy
\end{aligned}
\end{equation}
and  
\begin{align*}
J_1:=&\int_{\mathbb{R}^N}(\mu-1)  \sum_{i\ne j} V_iV_j\:dy \le Ck\int_{\Omega_{1}}(\mu(|y|)-1)  \left(\sum_{j\ge 2}V_j\right)\left(\sum_{i=1}^k V_i\right)\:dy\\
&\le Ck e^{-\frac{\eta R \pi}{k}} \int_{\mathbb{R}^N} (\mu-1) e^{-\gamma'|y-x_1|} \:dy, \quad \text{for some $\eta \in (0,1)$ and $\gamma'>0$}.
\end{align*}
For the last integral, the proof of Lemma \ref{lemma_mu} for $e^{-\gamma'|y-x_1|}$ in place of $V_1(y)^2$ gives that $J_1 = O\left (k R^{-m} e^{-\frac{\eta R \pi}{k}}\right)$ and $\displaystyle\int_{\mathbb{R}^{N}}(\mu-1) \sum_{i=1}^k V_{i}^2 \:dy = k\int_{\mathbb{R}^{N}}(\mu-1) V_1^2 \:dy$.
Now,
\begin{align*}
&\int_{\mathbb{R}^{N}} 2\sum_{j=1}^{k} \sum_{i=1}^{k} V_j^{3} V_i - \left(\sum_{i=1}^k V_i\right)^4\:dy\\
&=\int_{\mathbb{R}^{N}} \sum_{i=1}^k V_i^4 - 2 \left(V_1^3 \sum_{j\ne 1} V_j + \cdots + V_k^3 \sum_{j\ne k} V_j \right) - \sum_{\substack{(i,j,k,\ell)\\ \text{at most 2} \\ \text{of indices are same}} }V_iV_j V_k V_\ell \:dy.
\end{align*}
In view of  the symmetry
$$ \int_{\mathbb{R}^{N}} \sum_{i=1}^k V_i^4 - 2 \left(V_1^3 \sum_{j\ne 1} V_j + \cdots + V_k^3 \sum_{j\ne k} V_j \right) \:dy =  k\int_{\mathbb{R}^{N}} V_0^4  -2  V_1^3 \sum_{j\ge 2} V_j \:dy.$$
%
Also, since
\begin{align*}
&\{ (i,j,k,\ell)~|~ \text{at most 2 of indices are same}\} \subset \{ (i,j,k,\ell)~|~ \text{at least 2 of indices are not $1$} \}, \\
&J_2:=\int_{\mathbb{R}^N} \sum_{\substack{(i,j,k,\ell)\\ \text{at least 3} \\ \text{of indices distinct}} }V_iV_j V_k V_\ell\:dy \le Ck \int_{\Omega_1}  \left( \sum_{j\ge 2}^k V_j \right)^2 \left(\sum_{i=1}^k V_i\right)^2 \:dy.
\end{align*}
%
%
By the same estimate in \eqref{sumdiff}, $J_2 = O(k^{-2p}) = O(k^{-m+1-2\delta_0})$. In summary,  we have
\begin{equation}
\begin{aligned}\label{f5}
&I\left(U_0, \left( \sum_{i=1}^k V_{i}\right)\right)  
\\& = A_0+ 
 k\left(A_1+\frac{A_2}{R^{m}}-\frac{\alpha_1}{2} \left( \sum_{i=2}^{k} \int_{\mathbb{R}^{N}} V_1^{3} V_i\:dy\right) + O\left(k^{-m-2\delta_0}\right)\right) .
\end{aligned}
\end{equation}
Finally,
\begin{align*}
&\sum_{i=2}^{k} \int_{\mathbb{R}^{N}} V_1^{3} V_i\:dy\\
& = \sum_{i=2}^{k} \left(e^{-\left|x_{1}-x_{i}\right|}|x_1-x_i|^{-\frac{N-1}{2}}\right) \int_{\mathbb{R}^{N}} V_1(y)^{3} V_i(y) \left(e^{\left|x_{1}-x_{i}\right|}|x_1-x_i|^{\frac{N-1}{2}}\right) \:dy\\
&= J_3\sum_{i=2}^{k} \left(e^{-\left|x_{1}-x_{i}\right|}|x_1-x_i|^{-\frac{N-1}{2}}\right),\\
 \text{where} \quad J_3 &=  \int_{\mathbb{R}^{N}} V_0(y)^{3} V_0(y-x_i+x_1) \left(e^{\left|x_{1}-x_{i}\right|}|x_1-x_i|^{\frac{N-1}{2}}\right) \:dy.
\end{align*}
For sufficiently large $R$ and for $y$ near origin, $V_0(y-x_i+x_1)e^{|y-x_i+x_1|} |y-x_i+x_1|^{\left(\frac{N-1}{2}\right)}$ is positive and away from $0$. Since $  -|y| \le -|y-x_i+x_1| + |x_i-x_1| \le |y|$, we have that
$$e^{-|y-x_i+x_1|+|x_i-x_1|} \left(\frac{|x_i-x_1|}{|y-x_i+x_1|}\right)^{\left(\frac{N-1}{2}\right)} \ge \frac{1}{2} e^{-|y|}$$
for $|y|<1$ and $R$ sufficiently large, and thus $\displaystyle J_3 \ge \int_{B_1(0)} V_0(y)^3\left( \frac{1}{2}e^{-|y|}\right) \:dy.$ Also, since 
\begin{align*}
 &V_0(y)V_0(y-x_i+x_1) \left(e^{\left|x_{1}-x_{i}\right|}|x_1-x_i|^{\frac{N-1}{2}}\right) \\
 &\le C e^{-|y| - |y-x_i+x_1| + |x_i-x_1|}\left(\frac{|x_i-x_1|^{\left(\frac{N-1}{2}\right)}}{(1+|y|^{\left(\frac{N-1}{2}\right)})\cdot(1+|y-x_i+x_1|^{\left(\frac{N-1}{2}\right)})}\right) \le C,
\end{align*}
we have that $\displaystyle J_3 \le C\int_{\mathbb{R}^N} V_0(y)^2 \:dy$.
\end{proof}
Now we are ready to prove Theorem \ref{mainthm}.  
\begin{proof}[Proof of Theorem \ref{mainthm}]
By Proposition \ref{A.3} 
$$
F(R)= A_0+ 
 k\left(A_1+\frac{A_2}{R^{m}}-J(R)e^{-\frac{2R\pi}{k}}\left(\frac{k}{R}\right)^{\frac{N-1}{2}}+o\left(\frac{1}{k^{m+2\delta_0}}\right)\right).
$$Since  the function $g(R):=\frac{A_2}{R^{m}}-J(R)e^{-\frac{2R\pi}{k}}\left(\frac{k}{R}\right)^{\frac{N-1}{2}}$ has a maximum point in the interior of $S_k$ for $k$ sufficiently large,    $\max _{R \in S_{k}} F(R)$ is achieved by an interior point $R_0$ of $S_{k}$. Therefore,  we can conclude that  $(U_{R_0}, V_{R_0})$ is a solution of \eqref{system},  completing the proof of Theorem \ref{mainthm}.
\end{proof}

\bibliographystyle{amsplain}

\end{document}